\newtheorem{prop}{Proposition}
\newtheorem{teo}{Theorem}
\theoremstyle{definition}
\newenvironment{defi}
  {\pushQED{\qed}\defix}
  {\popQED\enddefix}
\newenvironment{exe}
  {\pushQED{\qed}\exex}
  {\popQED\endexex}
\newenvironment{rem}
  {\pushQED{\qed}\obsx}
  {\popQED\endobsx}
  \title{Heteroclinic networks in coupled cell systems}
\author{Liliana Garrido da Silva$^{1}$ \and Pedro Soares$^{2\star}$\\
$^1$ Centro de Matem\'{a}tica da Universidade do Porto, Portugal \\
	$^2$ Instituto Superior de Economia e Gest\~{a}o, Universidade de Lisboa, Portugal\\
    $^{2\star}$psoares@iseg.ulisboa.pt}
\begin{document}
\maketitle

\abstract{A coupled cell system is an ODE system associated with a coupled cell network, where the dimension is determined by the number of cells. A heteroclinic connection is a set of solution trajectories between two equilibria of an ODE system. A realization of a heteroclinic network is an ODE system that exhibits equilibria corresponding to the nodes and heteroclinic connections between them according to the heteroclinic network. This paper investigates the realization of heteroclinic networks within coupled cell systems, focusing on embedding heteroclinic connections in 2D and 3D invariant subspaces. We adapt Field's method of embedding each heteroclinic connection in distinct 2D synchrony subspaces to support multiple connections within the same subspace. Using the concept of book embedding from graph theory, we demonstrate that any heteroclinic network can be realized using a coupled cell system with a number of cells proportional to the network's book-thickness. Additionally, we extend our analysis to 3D synchrony subspaces, allowing for more complex realizations. In this case, the number of cells necessary for a realization is proportional to the number of nodes in the heteroclinic 
network.}



\section{Introduction}

Heteroclinic networks are a class of dynamical phenomena represented by directed graphs, where each node corresponds to an equilibrium point of a dynamical system, and each directed edge represents a trajectory connecting two equilibria.
Here, we consider dynamical systems given by a system of ordinary differential equations in $\mathbb{R}^n$.
Specifically, a heteroclinic connection is a trajectory that lies in the intersection of the unstable manifold of the source equilibrium and the stable manifold of the target equilibrium.
Usually, heteroclinic networks are defined as the union of heteroclinic cycles.
This is equivalent to assume that the directed graph is strongly connected, there is a directed path between any pair of nodes.

A central challenge in the study of heteroclinic networks is their realization: constructing a dynamical system whose phase portrait contains the desired network. 
The realization of a heteroclinic networks is not unique and different realizations of the same directed graph can exhibit different dynamical phenomena.
Realizations are particularly valuable when they are robust, meaning they persist under small perturbations within a given class of systems.
Robust realizations often rely on the presence of invariant subspaces, especially two-dimensional ones, as seen in equivariant systems \cite{CL23, ACL20}, coupled systems \cite{AP13, SM21}, and replicator dynamics \cite{PR22}.

In this work, we focus on realizing heteroclinic networks using coupled cell systems.
A coupled cell system is a dynamical system which respects the structure of a underlying coupled cell network, \cite{GST05, GS23}.
The coupled cell network is given by directed graph where the nodes and the directed arrows are called by cells and couplings, respectively.
The dimension of the coupled cell system is given by the number of cells, as each cell corresponds to a real variable of the ODE-system.
And, the dynamic on one variable depends on another if the is a coupling between the corresponding cells.
Coupled cell systems exhibit invariant spaces called synchrony subspaces, where subsets of cells evolve identically.
Synchrony subspaces are uniquely determined by the coupled cell network, via balanced colorings.

We consider homogeneous coupled cell networks with asymmetric inputs this means that all cells have the same type, but receive distinct couplings.
In this setting, the full-synchrony subspace, where all variables are equal, is one-dimensional and it serves as the location of equilibrium points in our constructions.
The heteroclinic connections are embedded in higher-dimensional synchrony subspaces, either two- or three-dimensional.

In this work, we are dealing with two different networks: heteroclinic networks and coupled cell networks.
To avoid ambiguity, we distinguish between heteroclinic networks (denoted with calligraphic letters) and coupled cell networks (denoted with standard capital letters). 
Moreover, we refer, respectively, to nodes and edges of a heteroclinic network $\mathcal{N}$ as equilibrium nodes and connections.
And, the nodes and edges of a coupled cell network $N$ are called cells and couplings, respectively.

\section{Results and discussion}

In this work, we explore realizations of heteroclinic network using coupling dynamical systems in two ways.
First, how many heteroclinic connections can be fitted in a 2D invariant subspace.
And second, how can we fit heteroclinic connections in a 3D invariant subspace.
Both ways lead to realizations with lower dimension than previous realizations presented in the literature.

Field \cite{F15} demonstrated that any heteroclinic network can be realized using a coupled cell system by embedding each heteroclinic connection in a distinct two-dimensional synchrony subspace. 
These subspaces can be visualized as "pages" in a book, with the full-synchrony subspace acting as the spine. 
Each page robustly supports a single heteroclinic connection between a saddle and a sink equilibrium.

In this work, we extend Field's construction by allowing multiple heteroclinic connections to share the same synchrony subspace. 
This leads us to adopt the concept of a book embedding from graph theory \cite{wiki,Y89}.
A book embedding is a 3D representation without self intersections of the graph with the nodes in a straight line and the edges on pages, half-planes with boundary in that straight line.
This concept is usually aplied to undirected graphs.
We adapt it to heteroclinic setting by imposing additional constraints: no node may have both incoming and outgoing connections on the same page, and no node may have multiple outgoing connections on a single page.
Borrowing the concept of book-thickness from graph theory as the minimum number of pages that a heteroclinic network needs to be book-embedded.
We prove that any heteroclinic network can be realized in a coupled cell system with a number of cells equal to the book-thickness of the network plus one.

We further generalize our approach by embedding heteroclinic connections in three-dimensional synchrony subspaces. In this setting, the unstable manifold at each node can have dimension two, allowing multiple outgoing connections from a single node within the same subspace.
We construct coupled cell networks with appropriate synchrony subspaces and show that any heteroclinic network with $n$ nodes can be realized in a system with $2n+1$ cells.
In this realization, each node is a saddle in one of these synchrony subspaces and a sink in the other synchrony subspaces.
This realization is almost complete, meaning that all but a measure-zero subset of the unstable manifolds belong to the stable manifolds.
In this approach, the dimension of the dynamical system realizing the heteroclinic network is proportional to the number of equilibria nodes.


\section{Background and Preliminaries}

In this section, we review two different network concepts: coupled cell networks and heteroclinic networks.

\subsection{Coupled cell networks}

We follow the definition of coupled cell system given in \cite{GST05, GS23}.

Given a set of cells $C$ and a multi-set of couplings between the cells including self-loops $E\subset C\times C$.
The cells and couplings are classified into different types using equivalence relations on the set of cells $\sim_{C}$ and on the set of couplings $\sim_{E}$, respectively. 
Here, we assume that the cells have the same type, each cell receives exactly one coupling of each coupling type.
This is usually referred to as a homogeneous network with asymmetric inputs.
In this work, we use the term coupled cell network in this sense.
Numbering the edge types from $1$ to $k$, the multi-set of edges can be partitioned as $E=E_1\cup E_2\cup \dots\cup E_k$ where the edges in the set $E_i$ have type $i$ and $i=1,\dots,k$.

\begin{defi}
Given a set of cells $C$ and subsets $E_i\subset C\times C$, for $i=1,\dots,k$.
The tuple $(C,E_1,\dots,E_k)$ is a \emph{coupled cell network} if for each $1\leq i\leq k$ and each cell $c\in C$, there exists a unique $c_i\in C$ such that $(c_i,c)\in E_i$.
\end{defi}     

Along the manuscript, we denote the starting cell of the incoming coupling with type $i$ targeting cell $c\in C$ by $c_i$, $(c_i,c)\in E_i$, for $i=1,\dots,k$.

\begin{exe}
Let $C=\{0,1,2\}$, $E_1=\{(1,0),(0,1),(1,2)\}$ and $E_2=\{(2,0),(2,1),(0,2)\}$.
The triple $(C,E_1,E_2)$ is a coupled cell network with $3$ cells and $2$ edges types.
This network is the second network, from the left, in Figure~\ref{fig:pn}.
The first type of edge is represented by a normal arrow.
And the second type of edges is represented by a double arrowhead.
\end{exe}

In order to graphically represent different edge types, we use arrow with different heads and shapes, see Figure~\ref{fig:pn1}.

We associate a dynamical system to a given coupled cell network as follows.
Each cell $c$ corresponds to a variable $x_c\in\mathbb{R}$.
For a function $f:\mathbb{R}^{k+1}\rightarrow \mathbb{R}$, we associate the following admissible vector field $f^N:\mathbb{R}^n\rightarrow \mathbb{R}^n$ where $n$ are the number of cells in $N$ and 
$$ f^N_c(x)=f(x_c,x_{c_1},\dots,x_{c_k}).$$
The coupled cell system is given by the ODE-system
$$\dot{x}=f^N(x).$$
A perturbation of a coupled cell system refers to a perturbation of the function $f$.

\begin{exe}
Let $N$ be the second coupled cell network displayed in Figure~\ref{fig:pn}, from the left.  
The coupled cell systems associated with $N$ have the following form:
$$
\begin{cases}
\dot{x}_0 &= f(x_0, x_1, x_2) \\
\dot{x}_1 &= f(x_1, x_0, x_2) \\
\dot{x}_2 &= f(x_2, x_0, x_1)
\end{cases},
$$
for some function $ f: \mathbb{R}^3 \to \mathbb{R} $. 
\end{exe}

A key feature of  coupled cell systems is the existence of invariant subspaces given by the structure of the underlying network.
A polydiagonal is a subspace of $\mathbb{R}^n$ where certain cell coordinates are equal, i.e., $x_i=x_j$, for certain cells $i$ and $j$.
Given a polydiagonal $\Delta$, we can assign a coloring to the cells such that two cells share the same color if and only if their coordinates are equal in $\Delta$.
There exists a one-to-one correspondence between polydiagonals and cell colorings.
A cell coloring is a equivalence relation on the set of cells, we say that two cells share the same color if they belong to the same equivalence class.
Given a cell coloring $\bowtie$, we can define a corresponding polydiagonal $\Delta_{\bowtie}$ where coordinates are equal if the associated cells share the same color
$$\Delta_{\bowtie}=\{x\in\mathbb{R}:c\bowtie d \Rightarrow x_c=x_d\}.$$
We say that a coloring $ \bowtie $ is balanced if, for any two cells $c\bowtie d$, we have that $c_i\bowtie d_i$ for each edge type $i=1,\dots,k$.
A polydiagonal $\Delta_{\bowtie}$ is invariant under any admissible vector field if and only if the associated coloring $\bowtie$ is balanced.
Such invariant polydiagonals are referred to as synchrony subspaces.

For homogeneous networks, which is the case here, the full-synchrony subspace $\Delta_0=\{x\in\mathbb{R}^n:\forall_{c,d} x_c=x_d \}$ is always a synchrony subspace.
This subspace is one-dimensional and we will use it to place the equilibrium points.
Every synchrony subspace contains the full-synchrony subspace, and together they form a lattice structure.
A minimal synchrony subspace is one that contains no other synchrony subspace except the full-synchrony subspace. 
Heteroclinic connections will be embedded within these minimal synchrony subspaces.
Note that the intersection of any two minimal synchrony subspaces is precisely the full-synchrony subspace.

\begin{exe}
Let $N$ be the second coupled cell network displayed in Figure~\ref{fig:pn}, from the left.
The full-synchrony subspace $\Delta_0=\{x_0=x_1=x_2\}$ is invariant for any coupled cell system associated with $N$.
Moreover, this network admits two synchrony subspaces with dimension 2:
$$\Delta_1 = \{ x_0 = x_2 \}, \quad \Delta_2 = \{ x_0 = x_1 \}.$$
The corresponding balanced coloring are given by the classes $\{\{0,2\},1\}$ and $\{\{0,1\},2\}$.

The coloring $\{0,\{1,2\}\}$ is not balanced.
The cells $1$ and $2$ have the same color and they receive an edge of the first type from the cells $0$ and $1$.
However, the cells $0$ and $1$ do not share the same color.
So, the coloring is not balanced and the polydiagonal $\{ x_1 = x_2 \}$ is not invariant for some coupled cell systems associated with $N$.
\end{exe}

\subsection{Heteroclinic networks}

Consider a smooth dynamical system defined on $ \mathbb{R}^n $ governed by
$$\dot{x} = F(x), \quad x \in \mathbb{R}^n,$$
where $ F: \mathbb{R}^n \to \mathbb{R}^n $ is a smooth vector field. 
Take an equilibrium point $\xi \in \mathbb{R}^n$, $F(\xi) = 0$. 
The stable and unstable manifolds of $\xi$ are denoted as $ W^s(\xi) $ and $ W^u(\xi) $ and  are defined as:
$$W^s(\xi) = \{ x \in \mathbb{R}^n : \lim_{t \to +\infty} \phi_t(x) = \xi \}, \quad
W^u(\xi) = \{ x \in \mathbb{R}^n : \lim_{t \to -\infty} \phi_t(x) = \xi \},$$
where $\phi_t(x)$ denotes the flow of the dynamical system at time $ t $ with initial condition given by $x$.
Let $\xi$ and $\xi'$ be two equilibria of the dynamical system. 
A heteroclinic connection from $\xi$ to $\xi'$ is the image of a trajectory $\phi_t(x)$ such that:
$$
\lim_{t \to -\infty} \phi_t(x) = \xi_i, \quad \lim_{t \to \infty} \phi_t(x) = \xi_j.
$$
The existence of such trajectory implies that $ W^u(\xi) \cap W^s(\xi') \neq \emptyset$.
If $\xi=\xi'$, we call the connection homoclinic.

A heteroclinic cycle is a finite collection of equilibria $ \{ \xi_1, \xi_2, \dots, \xi_k \} $ and heteroclinic connections $ \gamma_i $ such that:
$$
\lim_{t \to -\infty} \gamma_i(t) = \xi_i, \quad \lim_{t \to \infty} \gamma_i(t) = \xi_{i+1}, \quad \text{with } \xi_{k+1} := \xi_1.
$$
A heteroclinic network is a union of such heteroclinic cycles. 
We represent a heteroclinic network using a directed graph $\mathcal{N}$, where nodes correspond to equilibria and edges represent heteroclinic connections. 
A dynamical system is said to realize the heteroclinic network $\mathcal{N}$ if its phase portrait contains equilibria and heteroclinic connections as described by $\mathcal{N}$.

A realization is called robust if it persists under small perturbations of the system.
In particular, we will look to perturbations within the class of coupled cell systems associated with a fixed network structure $N$.

A realization of a heteroclinic network may not be ``visible'' if the unstable manifolds of the equilibrium points are not sufficiently contained within the heteroclinic network. 
A realization is said to be:
\begin{itemize}
\item Complete if the unstable manifolds of every equilibrium are entirely contained in the heteroclinic network.
\item Almost complete if the unstable manifolds of every equilibrium are contained in the heteroclinic network up to a set with zero Lebesgue measure, \cite[Definition 2.6]{ACL20}.
\end{itemize}

In order to avoid the confusing between the two networks structures, heteroclinic networks are named using calligraphic capital letter $\mathcal{N}$ and coupled cell networks are labeled using printed capital letters $N$.

\section{Realization via book embedding}\label{sec:realbookemb}

In this section, we adapt the realization of heteroclinic networks using coupled cell systems presented in \cite{F15}.
Field's realization of a heteroclinic network \cite{F15} places the equilibrium points in the full-synchrony line and each heteroclinic connection belongs to a different 2D synchrony subspaces.
This can be thought as a book where each page corresponds to a different $2D$ synchrony subspace and the equilibria are along the spine of the book.  
In graph theory, the book embedding of a graph is $3D$ representation of the graph where nodes are in the spine (line), each edge lies entirely within one page (plane) and the edges do not intersection.
book embedding allow each page to contain more than one edge.
Given a book embedding of a heteroclinic network, we show that there is a robust realization using coupled cell systems with as many cells as the number of pages, plus one.
We prove this result in the next subsection by adjusting Field's realization to allow for multiple heteroclinic connections in the same 2D synchrony subspaces.
In the last subsection, we illustrate some example and relate this realization with others from the literature.

First, we adapt the concept of book embedding from graph theory to the context of heteroclinic networks. And state the main result.

\begin{defi}
Let $G=(V,E)$ be a directed graph. 
A book embedding consists of:
\begin{itemize}
	\item a straight-line $l\subset\mathbb{R}^3$,
	\item a set of planes/pages $H_1,\dots,H_k\subset \mathbb{R}^3$,
	\item an embedding of the graph in $\mathbb{R}^3$, $\phi:G\rightarrow \mathbb{R}^3$,
\end{itemize}
 such that 
\begin{itemize}
	\item each vertex $v$ corresponds to a distinct point in the straight-line $p\in l$, $\phi(v)=p\in l$,
	\item each edge $e= v_1\rightarrow v_2$ corresponds to a smooth curve from $\phi(v_1)$ to $\phi(v_2)$ in one of the planes $H_i$, $\phi(e)\subset H_i$,
	\item the planes intersect exactly at the straight-line $l$, $H_i\cap H_j=l$,
	\item the curves only intersect the straight-line $l$ at the start and end point, $\phi(e)\cap l= \{\phi(v_1),\phi(v_2)\}$
	\item any pair of edges do not intersect, outside the straight-line $l$, $[\phi(e_1)\cap \phi(e_2)]\setminus l=\emptyset$
\end{itemize}
and
\begin{itemize}
	\item in each plane, each vertex has exclusively incoming edges or outgoing edges,
	\item in each plane, each vertex has at most two outgoing edges, one in each half-plane.
\end{itemize}

The book thickness of a graph is the minimal number of pages of any book embedding of that graph.
\end{defi}

The last two restrictions are not part of the usually book embedding from graph theory.

\begin{exe}\label{ex:hetnet8}
In Figure~\ref{fig:hetnet8}, the heteroclinic network on the left can be book embedded as shown on the right.
The embedding uses two pages, $H_1$ and $H_2$, each containing two edges.
The top equilibrium node has both incoming and outgoing connections, but these are placed on different pages, as required by our constraints.
Any book embedding of this heteroclinic network has two pages.
So, the book thickness of this network is equal to $2$.

Although, this network can be embedded in a single page under standard graph-theoretic definitions, without the last two constraints.
In particular, if some node has both incoming and outgoing connections on the same page, our dynamical realization of the heteroclinic network would be impossible.
The intersection of the invariant planes used in our realization of the heteroclinic network is an invariant line.
And the connection approaches or escapes the line depending if the connection is incoming or outgoing.
As the connections do not belong to the invariant line, a 2D invariant plane can not have trajectories approaching and escaping the same equilibrium point.
\end{exe}

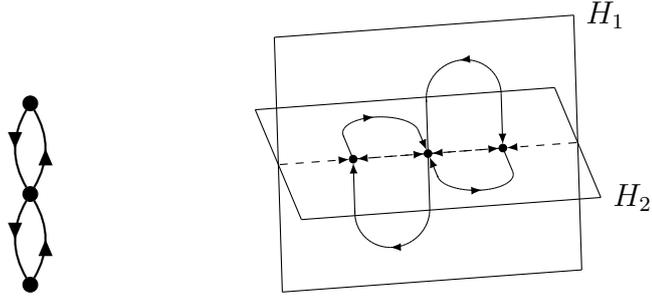
\begin{figure}[h]
\centering
\hspace{10mm}
\begin{tikzpicture}[
    midarrow/.style={
        postaction={decorate},
        decoration={markings, mark=at position 0.5 with {\arrow[scale=0.8,fill=black]{triangle 45}}}
    }
]
\node (n1) [draw,shape=circle,fill=black,scale=0.5]   {};
\node (n2) [draw,shape=circle,fill=black,scale=0.5] [below=of n1]  {};
\node (n3) [draw,shape=circle,fill=black,scale=0.5] [below=of n2]  {};

\draw[midarrow, thick] (n1) to [bend right] (n2);
\draw[midarrow, thick] (n2) to [bend right] (n1);
\draw[midarrow, thick] (n2) to [bend right] (n3);
\draw[midarrow, thick] (n3) to [bend right] (n2);
\end{tikzpicture}
\hspace{20mm}
\tdplotsetmaincoords{115}{70}
\begin{tikzpicture} [tdplot_main_coords,scale=0.5]

\tdplotsetrotatedcoords{60}{90}{90}
\node[tdplot_rotated_coords, inner sep=0, outer sep=0] (O) at (0,0,0) {};
\node (P1) [tdplot_rotated_coords,circle,draw,fill=black, inner sep=0pt, outer sep =0pt, minimum size=1mm] at (0,0,2) {};
\node (P2) [tdplot_rotated_coords,circle,draw,fill=black, inner sep=0pt, outer sep =0pt, minimum size=1mm] at (0,0,4) {};
\node (P3) [tdplot_rotated_coords,circle,draw,fill=black, inner sep=0pt, outer sep =0pt, minimum size=1mm] at (0,0,6) {};
\node[tdplot_rotated_coords, inner sep=0, outer sep=0] (A) at (0,0,8) {};

\draw[-latex, dashed] (O)  to (P1);
\draw[-latex, dashed] (P1) to (P2);
\draw[-latex, dashed] (P2) to (P1);
\draw[-latex, dashed] (P2) to (P3);
\draw[-latex, dashed] (P3) to (P2);
\draw[-latex, dashed] (A) to (P3);

\tdplotsetrotatedcoords{60}{90}{190}
\node (E2) [tdplot_rotated_coords,inner sep=0pt,outer sep =0pt,label=right:$H_1$] at (3.5,0,8) {};
\draw[ tdplot_rotated_coords] (-3.5,0,8)--(3.5,0,8);
\draw[ tdplot_rotated_coords] (3.5,0,0)--(3.5,0,8);
\draw[ tdplot_rotated_coords] (-3.5,0,0)--(3.5,0,0);
\draw[ tdplot_rotated_coords] (-3.5,0,0)--(-3.5,0,8);

\draw[ tdplot_rotated_coords] (P2) to (1.5,0,4);
\foreach \t in {0,5,...,180}{
\draw[ tdplot_rotated_coords] ({1.5 + sin(\t)},{0},{5 + cos(\t)}) -- ({1.5 + sin(\t + 5)},{0},{5 + cos(\t + 5)});}
\draw[-latex, tdplot_rotated_coords] ({1.5 + sin(90)},{0},{5 + cos(90)}) -- ({1.5 + sin(90 + 5)},{0},{5 + cos(90 + 5)});
\draw[-latex,  tdplot_rotated_coords] (1.5,0,6) to  (P3);

\draw[ tdplot_rotated_coords] (P2) to (-1.5,0,4);
\foreach \t in {0,5,...,180}{
\draw[ tdplot_rotated_coords] ({-1.5 - sin(\t)},{0},{3 + cos(\t)}) -- ({-1.5 - sin(\t + 5)},{0},{3 + cos(\t + 5)});}
\draw[-latex, tdplot_rotated_coords] ({-1.5 - sin(90)},{0},{3 + cos(90)}) -- ({-1.5 - sin(90 + 5)},{0},{3 + cos(90 + 5)});
\draw[-latex,  tdplot_rotated_coords] (-1.5,0,2) to  (P1);

\tdplotsetrotatedcoords{60}{90}{90}
\node (E2) [tdplot_rotated_coords,inner sep=0pt,outer sep =0pt,label=right:$H_2$] at (3.5,0,8) {};
\draw[ tdplot_rotated_coords] (-3.5,0,8)--(3.5,0,8);
\draw[ tdplot_rotated_coords] (3.5,0,0)--(3.5,0,8);
\draw[ tdplot_rotated_coords] (-3.5,0,0)--(3.5,0,0);
\draw[ tdplot_rotated_coords] (-3.5,0,0)--(-3.5,0,8);

\draw[latex-, tdplot_rotated_coords] (P2) to (1.5,0,4);
\foreach \t in {0,5,...,180}{
\draw[ tdplot_rotated_coords] ({1.5 + sin(\t)},{0},{5 + cos(\t)}) -- ({1.5 + sin(\t + 5)},{0},{5 + cos(\t + 5)});}
\draw[latex-, tdplot_rotated_coords] ({1.5 + sin(90)},{0},{5 + cos(90)}) -- ({1.5 + sin(90 + 5)},{0},{5 + cos(90 + 5)});
\draw[  tdplot_rotated_coords] (1.5,0,6) to  (P3);

\draw[latex-, tdplot_rotated_coords] (P2) to (-1.5,0,4);
\foreach \t in {0,5,...,180}{
\draw[ tdplot_rotated_coords] ({-1.5 - sin(\t)},{0},{3 + cos(\t)}) -- ({-1.5 - sin(\t + 5)},{0},{3 + cos(\t + 5)});}
\draw[latex-, tdplot_rotated_coords] ({-1.5 - sin(90)},{0},{3 + cos(90)}) -- ({-1.5 - sin(90 + 5)},{0},{3 + cos(90 + 5)});
\draw[  tdplot_rotated_coords] (-1.5,0,2) to  (P1);

\end{tikzpicture}

\caption{Heteroclinic network(left) and a book embedding of this heteroclinic network(right), where each connection is embedded in a distinct 2D plane (page).}
\label{fig:hetnet8}
\end{figure}

%

A simple book embedding of any heteroclinic network (without homoclinic connections) is given by placing each edge in a different page.
Field's \cite{F15} showed that every heteroclinic network (without homoclinic connections) can be realized using coupled cell systems as sketched in these simple book embeddings.
We extend this result for any book embedding as defined above.

\begin{teo}\label{teo:bookembedding}
Let $\mathcal{N}$ be a heteroclinic network without homoclinic connections and let $B$ be a book embedding of $\mathcal{N}$ with $k$ pages. Then the heteroclinic network can be robustly realized in a coupled cell system with $k+1$ cells.

In particular, any heteroclinic network with book-thickness $b$ and no homoclinic connections can be robustly realized in a coupled cell system with $b+1$ cells.
\end{teo}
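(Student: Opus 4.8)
The plan is to fix, for every $k$, one coupled cell network $N_k$ with $k+1$ cells that has $k$ two‑dimensional synchrony subspaces --- the ``pages'' --- pairwise meeting exactly along the full‑synchrony line, and then to build the admissible function $f$ page by page so that the flow on the $m$‑th page carries precisely the connections that $B$ draws on its $m$‑th page.

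\textbf{The network.} Label the cells $0,1,\dots,k$, use $k$ edge types, and let cell $j$ receive its type‑$j$ input from cell $0$ and its type‑$i$ input ($i\neq j$) from cell $i$, while cell $0$ receives its type‑$j$ input from cell $j$. A direct check of the balance condition shows that, for each $m\in\{1,\dots,k\}$, the two‑colouring with classes $\{m\}$ and $\{0,\dots,k\}\setminus\{m\}$ is balanced, so $\Delta_m=\{x:x_c=x_d\text{ whenever }c,d\neq m\}$ is a two‑dimensional (hence minimal) synchrony subspace, and $\Delta_m\cap\Delta_{m'}=\Delta_0$ for $m\neq m'$. Writing a point of $\Delta_m$ as $(u,v)$ with $v=x_m$, the induced planar field is $\dot u=f(u,\dots,u,v,u,\dots,u)$ (with $v$ in slot $m{+}1$), $\dot v=f(v,u,\dots,u)$; the diagonal $\{u=v\}$ is invariant with $\dot u=\dot v=F_0(u):=f(u,\dots,u)$, and at a diagonal point the Jacobian has eigenvector $(1,1)$ with eigenvalue $F_0'$ and transverse eigenvalue $\nu_m=\partial_1 f-\partial_{m+1}f$.

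\textbf{Equilibria, linearizations, connections.} Put the nodes of $\mathcal N$ on $\Delta_0$ in the spine order of $B$ and take $F_0$ with simple zeros $\xi_1<\eta_1<\xi_2<\dots<\eta_{N-1}<\xi_N$ (plus two outer sinks, to be inward at infinity), the $\xi_a$ sources and the auxiliary $\eta_a$ sinks of the spine flow; then every node of $\mathcal N$ is a hyperbolic, spine‑unstable equilibrium on the always‑invariant line $\Delta_0$. At each $(\xi_a,\dots,\xi_a)$ prescribe $\nu_m>0$ exactly when $\xi_a$ has an outgoing connection on $H_m$; then on $\Delta_m$ the point $(\xi_a,\xi_a)$ is a source when it has outgoing connections there, and otherwise a saddle whose unstable manifold lies along the diagonal. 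Here the ``only‑in‑or‑only‑out'' page constraint is exactly what makes this choice of sign consistent, and the two unstable branches of a source account for ``at most one outgoing branch per half‑plane''. For each page build a model planar field $\bigl(p_m^{0}(u,v),q^{0}(v,u)\bigr)$ with diagonal $F_0$ and the prescribed local data, whose off‑diagonal orbits realize the non‑crossing arc family drawn on $H_m$: the arcs in one half‑plane form, by the book‑embedding constraints, a nested/sequential family from sources to sinks, which one obtains by cascading single saddle/source‑to‑sink blocks of the kind used by Field. The only inter‑page coupling is that the $v$‑component $q$ is common, which we absorb by fixing one generic $q^{0}$ with $q^0(u,u)=F_0(u)$ and keeping all page‑dependence in the $p_m^{0}$; since $\Delta_1,\dots,\Delta_k$ and the plane $\{x_1=\dots=x_k\}$ are $2$‑planes through the diagonal meeting pairwise only there, a standard interpolation yields a smooth $f:\mathbb R^{k+1}\to\mathbb R$ with these prescribed restrictions. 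Each required connection then lies in $W^u\cap W^s$ inside the invariant plane $\Delta_m$, joining a source (or a saddle whose $1$‑dimensional unstable manifold is pinned to the spine) to a sink or to a saddle fed by such a source; the intersection is transverse within $\Delta_m$ and hence persists under $C^1$‑small perturbations of $f$, that is, robustly among coupled cell systems over $N_k$. Extra orbits --- along the spine, toward the $\eta_a$, or in unused half‑planes --- are harmless, since a realization need only \emph{contain} $\mathcal N$; the absence of homoclinic connections matters because $\Delta_m$ admits no robust loop at a source or a saddle.

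\textbf{Main obstacle and conclusion.} The crux is the construction just sketched: producing, for every page, a planar field that \emph{robustly} carries an entire non‑crossing family of heteroclinic connections between equilibria pinned on an invariant line, controlling the global $\alpha$‑ and $\omega$‑limits rather than only the linearizations, and doing so for all pages simultaneously under the shared $v$‑dynamics. I expect to handle it with an explicit half‑plane cascade checked by a Lyapunov‑type estimate, together with the extension lemma that glues the per‑page models into one admissible $f$. The final assertion is then immediate: apply the construction to a book embedding of $\mathcal N$ with $b$ pages, $b$ the book‑thickness, obtaining a robust realization in a coupled cell system with $b+1$ cells.
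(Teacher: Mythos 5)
Your overall architecture matches the paper's: the coupled cell network you describe is (up to the inductive presentation) the paper's $P_k$, with the same $2$D synchrony subspaces $\Delta_m$ meeting along the full-synchrony line, nodes placed on the spine, and one page of $B$ realized per $\Delta_m$. However, your choice of local data at the equilibria is inverted relative to the paper's, and this inversion creates a genuine gap. You make every node spine-unstable (a source of $F_0$, with auxiliary sinks $\eta_a$ in between), so that on a page where a node has only incoming connections it is a \emph{saddle} whose stable manifold in $\Delta_m$ is one-dimensional, i.e.\ consists of exactly two branches, one per half-plane, each a single trajectory with a single $\alpha$-limit. Consequently a node can receive at most one connection per half-plane per page in your scheme. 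But the definition of book embedding bounds only the \emph{outgoing} edges per page (at most two, one per half-plane); it places no bound on incoming edges, and the paper's own examples already use two nested incoming arcs at one node on one page, and nothing prevents three or more. The paper avoids this by making every node spine-stable ($\alpha_0^i+\cdots+\alpha_k^i<0$), so that a target node is a genuine \emph{sink} in $\Delta_m$ with an open two-dimensional basin that can absorb arbitrarily many incoming trajectories; the source node is then a saddle with a one-dimensional unstable manifold transverse to the spine, which is exactly why the ``at most two outgoing, one per half-plane'' constraint appears. As written, your construction cannot realize an arbitrary book embedding with $k$ pages.

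Separately, the step you defer as the ``main obstacle'' is precisely where the paper does its real work. Because the $v$-equation $\dot v=f(v,u,\dots,u)$ is shared by all pages, prescribing the flow along an arc on page $m$ constrains the flow along any arc on page $m'$ passing through the same $(u,v)$ after the pages are identified; the paper resolves this by adding ``horizontal flow'' terms supported in disjoint tubular neighborhoods ($A$ and $B$) and, when arcs from different pages cross in the overlay, by adjusting the arcs so that the induced speeds satisfy $w\tilde w>0$ at the crossing (the three cases of its Figure~5). Your proposal identifies this coupling but offers only the expectation of ``an explicit half-plane cascade checked by a Lyapunov-type estimate,'' so the global existence of the connections --- the content of the theorem beyond the linear analysis --- is not established. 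To repair the argument you would need to (i) revert to spine-stable nodes so that incoming connections target sinks in their pages, and (ii) carry out the tubular-neighborhood/crossing-adjustment construction (or an equivalent) rather than postulating it.
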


This theorem shows that the dimension where a heteroclinic network is realized using a coupled cell system is directly tied to how efficiently the network can be embedded in a book format. 
The fewer pages needed, the fewer cells required in the coupled cell network and the lower is the dimension of the dynamical system.

\subsection{Proof}

Next, we reproduce the realization presented in \cite{F15} and adjust it to any book embedding.

{\bf Construction of the coupled cell network $P_n$}

First, we describe how to construct a family of coupled cell networks that have the desired 2D synchrony subspaces.

The coupled cell network considered in the realization is inductively defined as follows. 
Let $P_1$ be the first network in Figure~\ref{fig:pn} with two cells and one type of edge.
Let $P_n$ be the network with cells $\{0,1,\dots, n\}$ and edges $E_1, E_2,\dots, E_n$ divided by types.
The cells of network $P_{n+1}$ are $\{0,1,\dots, n,n+1\}$ and there are $n+1$ edge types.
For $j=1,\dots,n$, the edges of type are $E_j\cup\{(j,n+1)\}$.
And the edges of new type, $n+1$, are  $\{(n+1,c):c=0,1\dots,n\}\cup \{(0,n+1)\}$.
Figure~\ref{fig:pn} displays the networks $P_1$, $P_2$, $P_3$ and $P_5$ obtained from the previous construction.

\begin{figure}[h]
\begin{subfigure}{.1\textwidth}
\begin{tikzpicture}
\node (n1) [circle,draw]   {0};
\node (n2) [circle,draw] [below=of n1]  {1};

\draw[->, thick] (n2) to (n1);
\draw[->, thick] (n1) to (n2);
\end{tikzpicture}
\end{subfigure}
\begin{subfigure}{.20\textwidth}
\begin{tikzpicture}
\node (n1) [circle,draw]   {0};
\node (n2) [circle,draw] [below=of n1]  {1};
\node (n3) [circle,draw]  [right=of n1] {2};

\draw[->, thick] (n2) to (n1);
\draw[->, thick] (n1) to (n2);
\draw[->, thick] (n2) to (n3);

\draw[->>, thick] (n3) to (n1);
\draw[->>, thick] (n3) to  (n2);
\draw[->>, thick] (n1) to  (n3);
\end{tikzpicture}
\end{subfigure}
\begin{subfigure}{.20\textwidth}
\begin{tikzpicture}
\node (n1) [circle,draw]   {0};
\node (n2) [circle,draw] [below=of n1]  {1};
\node (n3) [circle,draw]  [right=of n1] {2};
\node (n4) [circle,draw] [below=of n3]  {3};

\draw[->, thick] (n2) to (n1);
\draw[->, thick] (n1) to (n2);
\draw[->, thick] (n2) to (n3);
\draw[->, thick] (n2) to (n4);

\draw[->>, thick] (n3) to (n1);
\draw[->>, thick] (n3) to  (n2);
\draw[->>, thick] (n1) to  (n3);
\draw[->>, thick] (n3) to  (n4);

\draw[->>>, thick] (n4) to (n1);
\draw[->>>, thick] (n4) to  (n2);
\draw[->>>, thick] (n4) to  (n3);
\draw[->>>, thick] (n1) to  (n4);
\end{tikzpicture}
\end{subfigure}
\begin{subfigure}{.4\textwidth}
\begin{tikzpicture}
\node (n1) [circle,draw]   {0};
\node (n2) [circle,draw] [right=of n1]  {1};
\node (n3) [circle,draw]  [below right=of n2] {2};
\node (n4) [circle,draw]  [below left=of n3] {3};
\node (n5) [circle,draw] [left=of n4]  {4};
\node (n6) [circle,draw]  [above left=of n5] {5};

\draw[->, thick] (n2) to (n1);
\draw[->, thick] (n1) to (n2);
\draw[->, thick] (n2) to (n3);
\draw[->, thick] (n2) to (n4);
\draw[->, thick] (n2) to (n5);
\draw[->, thick] (n2) to (n6);

\draw[->>, thick] (n3) to (n1);
\draw[->>, thick] (n3) to (n2);
\draw[->>, thick] (n1) to (n3);
\draw[->>, thick] (n3) to (n4);
\draw[->>, thick] (n3) to (n5);
\draw[->>, thick] (n3) to (n6);

\draw[->>>, thick] (n4) to (n1);
\draw[->>>, thick] (n4) to (n2);
\draw[->>>, thick] (n4) to (n3);
\draw[->>>, thick] (n1) to (n4);
\draw[->>>, thick] (n4) to (n5);
\draw[->>>, thick] (n4) to (n6);

\draw[->>>>, thick] (n5) to (n1);
\draw[->>>>, thick] (n5) to (n2);
\draw[->>>>, thick] (n5) to (n3);
\draw[->>>>, thick] (n5) to (n4);
\draw[->>>>, thick] (n1) to (n5);
\draw[->>>>, thick] (n5) to (n6);

\draw[->>>>>, thick] (n6) to (n1);
\draw[->>>>>, thick] (n6) to (n2);
\draw[->>>>>, thick] (n6) to (n3);
\draw[->>>>>, thick] (n6) to (n4);
\draw[->>>>>, thick] (n6) to (n5);
\draw[->>>>>, thick] (n1) to (n6);
\end{tikzpicture}
\end{subfigure}
\caption{Examples of coupled cell networks $P_n$ for $n=1,2,3,5$ used to realize heteroclinic network with connections in 2D synchrony subspaces.}
\label{fig:pn}
\end{figure}
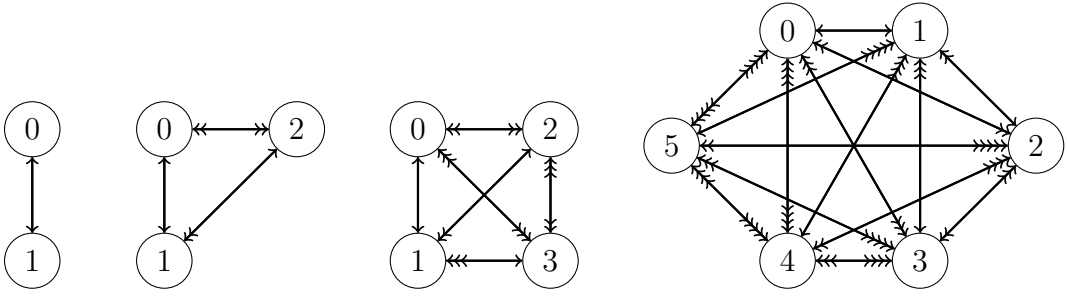

Following \cite[Proposition 3.3 and Lemma 4.5]{F15}, the networks $P_n$ have $n+1$ cells and $n$ asymmetric inputs. 
Moreover, for every $j=1,\dots,n$, $\Delta_j=\{x_0=x_i, i\neq j\}$ is a 2D-synchrony subspace of $P_n$. 
In each synchrony subspace, $\Delta_j$, the coupled cell systems has the following form:
\begin{equation}\label{eq:ccs2d}
\begin{cases}
\dot{x_0}=f(x_0,x_0,\dots,x_0,\overbrace{x_j}^{\text{input j}},x_0,\dots,x_0)\\
\dot{x_j}=f(x_j,x_0,\dots,x_0,x_0,x_0,\dots,x_0)\\
\end{cases},
\end{equation}
for some $f:\mathbb{R}^{n+1}\rightarrow \mathbb{R}$.
Let $p\in \Delta_0\subset \mathbb{R}^{n+1}$ be an full synchronous equilibrium of the coupled cell system $\dot{x}=f^{P_n}(x)$. 
The eigenvalues of the Jacobian matrix of (\ref{eq:ccs2d}) at $p$ are $f_0(p)+f_1(p)+\dots+f_{n}(p)$ and $f_0(p)-f_j(p)$, where $f_i$ is the first derivative of $f(y_0,y_1,\dots,y_n)$ with respect to $y_i$. 
Moreover, the eigenvalues of the coupled cell system $\dot{x}=f^{P_n}(x)$ at a full synchronous equilibrium $p\in\Delta_0$ are:
$$f_0(p)+f_1(p)+\dots+f_n(p),f_0(p)-f_1(p),f_0(p)-f_2(p), \dots, f_1(p)-f_n(p).$$

{\bf Local dynamics near equilibrium points}

Given a book embedding of a heteroclinic network $\mathcal{N}$ with thickness $k$ and $n$ nodes, the nodes of $\mathcal{N}$ are mapped to points in a straight-line $l\subset \mathbb{R}^3$.
Without a loss of generality, we can assume that this line is equal to the $x$-axis, $l=\{(x,0,0):x\in\mathbb{R}\}$. 
Let $(\rho_i,0,0)$ be the image of a node $n_i$ by the book embedding and define the full-synchrony point $p_i=(\rho_i,\dots,\rho_i)\in\Delta_0\subset\mathbb{R}^{k+1}$.
Those points will correspond to equilibrium points in the realization of the heteroclinic network using a system associated to the coupled cell network $P_{k}$.
The plane $H_j$ of the book embedding will sketch the heteroclinic connections in the 2D synchrony subspace $\Delta_{j+1}$, when $j=1,\dots,k$.

For each point $p_i$, we choose constants $\alpha_0^i, \alpha_1^i, \dots \alpha_{k}^i$ such that $\alpha_0^i+ \alpha_1^i+ \dots +\alpha_{k}^i<0$, $\alpha_0^i-\alpha_{j}^i>0$ if there exists an outgoing edge starting in $p_i$ on the plane $H_j$ and $\alpha_0^i-\alpha_{j}^i<0$ otherwise, for $j=1,\dots,k$. 
The previous conditions are feasible because each plane $H_j$ has exclusively incoming or outgoing edges from a equilibrium point and the heteroclinic network has at least one outgoing connection from each node.
Take, for example, $\alpha_0^i=-1$, $\alpha_{j}^i=-2k$ if there is an outgoing edge from $p_i$ in the plane $H_j$ and $\alpha_{j}^i=1$ otherwise.

In a sufficient small neighborhood of each full-synchrony point $p_i\in\Delta_0\subset \mathbb{R}^{k+1}$, we define $f:\mathbb{R}^{k+1}\rightarrow \mathbb{R}$ such that 
$f(p_i)=0$ and the derivatives of $f$ at $p_i$ are equal to $\alpha_0^i,\alpha_1^i, \dots, \alpha_{k}^i$, i.e. $$f_j(p_i)=\alpha_j^i,$$
for $j=0,\dots,k$.
This means that $p_i$ is an equilibrium point of the coupled cell system $f^{P_{k}}$.
Moreover, there are trajectories escaping from $p_i$ into the synchrony subspace $\Delta_{j}$ only if there exists an outgoing edge leaving $n_i$ on the page $H_j$.
This function can be obtained as follows:
Take $\epsilon_1,\dots,\epsilon_n>0$ sufficient small such that $\overline{B_{2\epsilon_i}(p_i)}\cap \overline{B_{2\epsilon_j}(p_j)}=\emptyset$ for any $i\neq j$. 
And define
$$f(y_0,y_1,\dots,y_{k})=\sum_{i=1}^{n}\delta_{p_i}(y_0,y_1,\dots,y_{k}) \sum_{j=0}^{k} \alpha_j^{i}(y_j-\rho_i),$$
where $\delta_{p_i}$ is a bump function which is $1$ if $(y_0,y_1,\dots,y_{k})\in B_{\epsilon_i}(p_i)$ and it is $0$ if $(y_0,y_1,\dots,y_{k})\notin B_{2\epsilon_i}(p_i)$.
The coupled cell systems associated with $P_{k}$ has the form
$$\begin{cases}
\dot{x_0}=f(x_0,x_1,x_2, \dots,x_{k})\\
\dot{x_1}=f(x_1,x_0, x_2,\dots,x_{k})\\
\vdots\\
\dot{x_{k}}=f(x_{k},x_1,x_2,\dots,x_0)
\end{cases}$$
Note that the inputs of $f$ permute, so the vector field $f^{P_{k}}$ vanishes outside the balls $B_{2\epsilon_i}(p_i)$.
Moreover, inside each ball $B_{\epsilon_i}(p_i)$ the vector field $f^{P_{k}}$ is a linear map such that $p_i$ is a equilibrium point, $f^{P_{k}}(p_i)=0$, and the eigenvalues of $J_f^{P_{k}}(p_i)$ are 
$$\alpha_0^i+\alpha_1^i+\dots+\alpha_{k}^i, \alpha_0^i-\alpha_1^i,\alpha_0^i-\alpha_2^i, \dots, \alpha_0^i-\alpha_{k}^i.$$

Thus the vector field $f^{P_{k}}$ respects the local conditions, around the equilibrium points, to be a realization of the heteroclinic network, $\mathcal{N}$.
This means that if $n_i$ is equilibrium node with a outgoing connection on page $H_j$, then the equilibrium point $p_i$ has a unstable direction in $\Delta_j\setminus \Delta_0$.
And if $n_i$ is equilibrium node with a incoming connection on page $H_j$, then the equilibrium point $p_i$ is a sink in $\Delta_j$.

{\bf Realizations of heteroclinic connections}

In order to realize the heteroclinic connections we will change the coupled cell system outside a neighbourhood of the line $\Delta_0$.
Consider the embedding of a heteroclinic connection from $n_s$ to $n_t$ to an curve inside the plane $H_j$.
Let $\kappa>0$ such that $\kappa<\epsilon_s,\epsilon_t$ and the parallel translation by $\pm\kappa$ of $\Delta_0$ in $\Delta_{j}$ intersects transversely the unstable manifold of $p_s$. 
Note that the equilibrium points are stable in $\Delta_0$, since $\alpha_0^s+ \alpha_1^s+ \dots +\alpha_{k}^s<0$.
As $\alpha_0^s-\alpha_{j}^s>0$, the unstable manifold of $p_s$ in $\Delta_{j}$ transversely crosses a parallel translation by $\pm\kappa$ of $\Delta_0$, for $\kappa$ sufficiently small.
Moreover, the parallel translation by $\pm\kappa$ of $\Delta_0$ in $\Delta_{j}$ intersects the the stable manifold of $p_t$ since $\alpha_1^t+ \alpha_2^t+ \dots +\alpha_{k+1}^t,\alpha_1^t-\alpha_{j+1}^t<0$ and $\kappa<\epsilon_t$.
The parallel translation by $\pm\kappa$ of $\Delta_0$ in $\Delta_{j+1}$ is represented by the red line in Figure~\ref{fig:2dtraj}.
Since the heteroclinic netowkr is finite, we can choose $\kappa$ such that the previous hold for every heteroclinic connection.
Moreover, we can assume, without loss of generality, that the book embedding of each heteroclinic connection crosses the tubular neighborhood of $l$ with radius $\kappa$ exactly two time: one when escaping the starting node and another when approaching the targeting node.

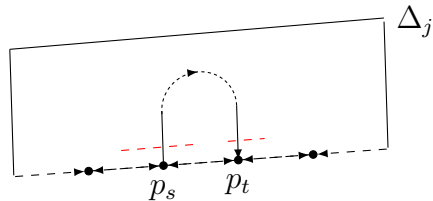
\begin{figure}[h]
\centering
\tdplotsetmaincoords{115}{70}
\begin{tikzpicture} [tdplot_main_coords,scale=0.5]

\tdplotsetrotatedcoords{60}{90}{90}
\node[tdplot_rotated_coords, inner sep=0, outer sep=0] (O) at (0,0,0) {};
\node (P1) [tdplot_rotated_coords,circle,draw,fill=black, inner sep=0pt, outer sep =0pt, minimum size=1mm] at (0,0,2) {};
\node (P2) [tdplot_rotated_coords,circle,draw,fill=black, inner sep=0pt, outer sep =0pt, minimum size=1mm, label=below:$p_s$] at (0,0,4) {};
\node (P3) [tdplot_rotated_coords,circle,draw,fill=black, inner sep=0pt, outer sep =0pt, minimum size=1mm, label=below:$p_t$] at (0,0,6) {};
\node (P4) [tdplot_rotated_coords,circle,draw,fill=black, inner sep=0pt, outer sep =0pt, minimum size=1mm] at (0,0,8) {};
\node[tdplot_rotated_coords, inner sep=0, outer sep=0] (A) at (0,0,10) {};
\draw[-latex, dashed] (O)  to (P1);
\draw[-latex, dashed] (P1) to (P2);
\draw[-latex, dashed] (P2) to (P1);
\draw[-latex, dashed] (P2) to (P3);
\draw[-latex, dashed] (P3) to (P2);
\draw[-latex, dashed] (P3) to (P4);
\draw[-latex, dashed] (P4) to (P3);
\draw[-latex, dashed] (A) to (P4);
\draw[ dashed, red] (1.5,2.46,0.5) to (2.5,4.11,0.5);

\draw[ dashed, red] (3,4.9,0.5) to (3.5,5.75,0.5);
\tdplotsetrotatedcoords{60}{90}{190}
\node (E2) [tdplot_rotated_coords,inner sep=0pt,outer sep =0pt,label=right:$\Delta_j$] at (3.5,0,10) {};
\draw[ tdplot_rotated_coords] (A)--(3.5,0,10);
\draw[ tdplot_rotated_coords] (3.5,0,0)--(3.6,0,10);
\draw[ tdplot_rotated_coords] (O)--(3.5,0,0);

\draw[ tdplot_rotated_coords] (0,0,4) to (1.5,0,4);
\draw[-latex,  tdplot_rotated_coords] (1.5,0,6) to  (0,0,6);
\draw[latex-, tdplot_rotated_coords] ({1.5 + sin(90)},{0},{5 + cos(90)}) -- ({1.5 + sin(90 + 5)},{0},{5 + cos(90 + 5)});

\foreach \t in {0,10,...,180}
\draw[ tdplot_rotated_coords] ({1.5 + sin(\t)},{0},{5 + cos(\t)}) -- ({1.5 + sin(\t + 5)},{0},{5 + cos(\t + 5)});

\end{tikzpicture}
\caption{Illustration of a heteroclinic trajectory connecting $n_s$ to $n_t$ 
embedded in the 2D synchrony subspace $\Delta_j$.}
\label{fig:2dtraj}
\end{figure}

Consider a heteroclinic connection from $n_s$ to $n_t$ that is mapped by the book embedding into an curve inside the plane $H_j$.
To simplify notation and provide an expression for the coupled cell system, we rotate the plane $H_j$ into the $xy$-plane. 
Let $\gamma:[0,1]\rightarrow \mathbb{R}^2$ be the book embedding of the heteroclinic connection rotated to the $xy$-plane.
Then the path $\gamma=(\gamma_1,\gamma_2)$ goes from $(\rho_s,0)$ to $(\rho_t,0)$ and it crosses the line $y=\pm \kappa$ twice. 
So there are $\tau_1$ and $\tau_2$ such that $\gamma_2(\tau_1)=\gamma_2(\tau_2)=\pm \kappa$ and $|\gamma_2(t)|> \kappa$ if and only if $\tau_1<t<\tau_2$.
Next, the heteroclinic connection will be realized in the coupled cell system $f^{P_{k}}$ by a trajectory following the arc $\psi:[\tau_1,\tau_2]\rightarrow \Delta_{j}\subset\mathbb{R}^{k+1}$ given by $(\psi)_i(t)=\gamma_1(t)$, $i\neq j$, and $(\psi)_{j}(t)=\gamma_1(t)+\gamma_2(t)$. 

We modify the function $f$ in such way that the coupled cell system $f^{P_{k}}$ is tangent to the arc $\psi:[\tau_1,\tau_2]\rightarrow \mathbb{R}^{k+1}$ outside the $\kappa$ tubular neighborhood of the full synchrony line $\Delta_0$, $B_\kappa (\Delta_0)$ . 
Let $A=B_{\epsilon}(\psi([\tau_1,\tau_2]))\setminus \overline{B_\kappa (\Delta_0)}$ and $B= B_{\epsilon}(\{(\gamma_1(t)+\gamma_2(t), \gamma_1(t),\dots,\gamma_1(t)): t\in[\tau_1,\tau_2]\})\setminus \overline{B_\kappa (\Delta_0)}$ be tubular neighborhoods. 
Choosing $\epsilon$ small enough we have that $A\cap B=\emptyset$. 
Moreover, for $\kappa$ small enough, we have that $W^u (p_s)\cap A\neq \emptyset$.
The sets $A$ and $B$ are tubular neighborhoods corresponding to the inputs in equation (\ref{eq:ccs2d}) of the function $f$.
So, we are going to change the function $f$ in these tubular neighborhoods using ``horizontal flows'' following the arcs $\psi$ and $\{(\gamma_1(t)+\gamma_2(t), \gamma_1(t),\dots,\gamma_1(t)): t\in[\tau_1,\tau_2]\}$. Let  $\delta_A$($\delta_B$) be a bump function which is equal to zero outside $A$($B$) and it is bigger than zero inside $A$($B$). 
We add the following term to $f$
$$\delta_A(y_0,y_1,\dots,y_k)v(y)+\delta_B(y_0,y_1,\dots,y_k)w(y),$$
where $v(y)$ is the derivative of $\gamma_1$ at the time $t$ where the projection of $y$ into $\psi([\tau_1,\tau_2])$ (the center of $A$) is equal to $\psi(t)$, when $y\in A$, and 
$w(y)$ is the derivative of $\gamma_1+\gamma_2$ at the time $t$ where the projection of $y$ into the center of $B$ is equal to $(\gamma_1(t)+\gamma_2(t), \gamma_1(t),\dots,\gamma_1(t))$, when $y\in B$. 

The coupled cell network $\dot{x}=f^{P_k}(x)$ realizes the heteroclinic connection from $n_s$ to $n_t$ for the trajectory starting in $W^u(p_s)\cap \Delta_j\cap B_\kappa(\Delta_0)\neq \emptyset$.
Such trajectory crosses the tubular neighborhood of $\Delta_0$, and then follows the direction of $\psi$ until it reaches again the tubular neighborhood of $\Delta_0$ near the point $p_t$.
Since the equilibrium point $p_t$ is a sink in $\Delta_j$, we know that this trajectory belongs to the stable manifold of $p_t$.
Figure~\ref{fig:2dtraj} sketches the heteroclinic connection constructed above. 
Moreover, the realization of the heteroclinic connection persists for small perturbations of the function $f$.

Now, we should repeat the process for the other heteroclinic connections.
For the other connections in the same page, there is no interference in realizing them as their embedding do not cross and we can select disjoint tubular neighborhoods.  
However, for connections in other pages, there can be interference if when we overlap two pages there is a crossing between connections on different pages.
Next, we take care of this interference by adjusting the book embedding of the connections when needed.

{\bf Pages and connections overlap}

Take another heteroclinic connection from $n_{\tilde{s}}$ to $n_{\tilde{t}}$ which is book-embedded into the plane $H_{\tilde{j}}$ and define $\tilde{\gamma}$ to be the the book embedding of the heteroclinic connection rotated to the $xy$-plane.
Depending if $\tilde{\gamma}$ intersects any of the previous arcs $\gamma$ or not, we adjust or not the book-embedding by changing the arc $\tilde{\gamma}$.
The intersection between these two arc do not depend if they are in the same page or not, as every arc is the rotation to the $xy$-plane.
In fact, the intersection can only occur if the tho connection belong to different pages, as the book embedding does not allow crossings.  
When there is no intersection, the tubular neighborhoods $\tilde{A}$ and $\tilde{B}$ are defined as before by taking a radius small enough to avoid intersections with previous defined tubular neighborhoods.
In this case, we add analogous terms to the function $f$ to realize the ``horizontal flow'' without changing the realization of the previous heteroclinic connections.  
So, this non-crossing heteroclinic connection from $n_{\tilde{s}}$ to $n_{\tilde{t}}$ is robustly realized in the coupled cell system $\dot{x}=f^{P_{k}}(x)$.
Next, we look to the case of a arc that intersect some of the previous arcs, in the $xy$-plane.

Suppose that the arc $\tilde{\gamma}$ intersects one of the previous arcs.
We can assume that each intersecting point belongs to exactly two arcs.
Moreover, the intersection occurs for $|y|>\kappa$, by also adjusting the book embedding.
Let $\gamma$ be the arc that intersects $\tilde{\gamma}$.
Note that the two arc intersecting must correspond to heteroclinic connections book-embedded in two different planes.
So the heteroclinic connection associated to $\gamma$ is book embedded in a page $H_j$, where $j\neq \tilde{j}$.
Let $\gamma^j$, $A$ and $B$ be as defined before to realize the heteroclinic connection in $\Delta_j$.
And define $\tilde{\gamma}^{\tilde{j}}$ the arc in $\Delta_{\tilde{j}}$, and the tubular neighborhoods $\tilde{A}$ and $\tilde{B}$ of $\tilde{\gamma}^{\tilde{j}}$ and $(\tilde{\gamma}_1+\tilde{\gamma}_2, \tilde{\gamma}_1,\dots,\tilde{\gamma}_1)$, respectively, for the arc $\tilde{\gamma}$.
Since $j\neq \tilde{j}$, we have that $A$ and $\tilde{A}$ do not intersect.
Looking to the equation (\ref{eq:ccs2d}), we see that the term $\delta_{\tilde{A}}(y)\tilde{v}(y)$ can be added to the function $f$ and the previous realized heteroclinic trajectories still exist. 
However, $B$ and $\tilde{B}$ do intersect. 
The derivatives of $\gamma_1+\gamma_2$ and $\tilde{\gamma}_1+\tilde{\gamma}_2$ should be both positive or both negative, i.e. $w(y) \tilde{w}(y)>0$, for $y \in B\cap \tilde{B}$.
If $w(y) \tilde{w}(y)>0$, then we can add the term $\delta_{\tilde{B}}(y)\tilde{w}(y)$ to the function $f$ without  perturbing the previous heteroclinic trajectories and realizing the heteroclinic connection associated with $\tilde{\gamma}$.
In order to finish the proof of Theorem~\ref{teo:bookembedding}, we see how we can adjust the arc $\tilde{\gamma}$ to ensure that $w(y) \tilde{w}(y)>0$.

Let $\tau_3$ and $\tau_4$ be times when the path $(\gamma_1(t)+\gamma_2(t), \gamma_1(t),\dots,\gamma_1(t))$ crosses the tubular neighborhood $\tilde{B}$ and pass through the intersection point. 
Similarly, let $\tilde{\tau}_3$ and $\tilde{\tau}_4$ be the times when the arc $(\tilde{\gamma}_1(t)+\tilde{\gamma}_2(t), \tilde{\gamma}_1(t),\dots,\tilde{\gamma}_1(t))$ to crosses $B$.
Without loss of generality, we can assume that $\dot{\gamma}_1$ and $\dot{\gamma}_2$ ($\dot{\tilde{\gamma}}_1$ and $\dot{\tilde{\gamma}}_2$) do not vanish between $\tau_3$ and $\tau_4$ ($\tilde{\tau}_3$ and $\tilde{\tau}_4$, respectively).
Figure~\ref{fig:adj2dtraj} displays the possibles cases: (a) $w(y) \tilde{w}(y)>0$;  (b) $w(y) \tilde{w}(y)<0$ and $\dot{\tilde{\gamma}}_1 \dot{\tilde{\gamma}}_2<0$; and (c) $w(y) \tilde{w}(y)<0$ and $\dot{\tilde{\gamma}}_1 \dot{\tilde{\gamma}}_2>0$.
For the case
\\
(a) we do not need to adjust the arc $\tilde{\gamma}$ as $w(y) \tilde{w}(y)>0$;\\
(b) we increase or decrease the speed of $\tilde{\gamma}_1$ to make $(\dot{\tilde{\gamma}}_1(t)+\dot{\tilde{\gamma}}_2(t))$ and $(\dot{\gamma}_1(t)+\dot{\gamma}_2(t))$ of the same sign, since $\dot{\tilde{\gamma}}_1 \dot{\tilde{\gamma}}_2<0$; \\
(c) we make smooth adjustments to the arc $\tilde{\gamma}$ in the interval $[\tilde{\tau}_3,\tilde{\tau}_4]$ such that the adjusted arc is arbitrarily close to the union of straight lines: $\tilde{\gamma}(\tilde{\tau}_3)$--$(\tilde{\gamma}_1(\tilde{\tau}_4),\tilde{\gamma}_2(\tilde{\tau}_3))$--
$(\tilde{\gamma}_1(\tilde{\tau}_3),\tilde{\gamma}_2(\tilde{\tau}_4))$--$\tilde{\gamma}(\tilde{\tau}_4)$. 
In the adjusted arc, the signs of $\dot{\tilde{\gamma}}_2$ is inverted, around a neighborhood of the intersection point.
So, we can speed up or down $\tilde{\gamma}_1$ and match the signals of $(\dot{\tilde{\gamma}}_1(t)+\dot{\tilde{\gamma}}_2(t))$ and $(\dot{\gamma}_1(t)+\dot{\gamma}_2(t))$, as in case (b).
Figure~\ref{fig:adj2dtraj} sketches the adjustments to $\tilde{\gamma}$ made above.  


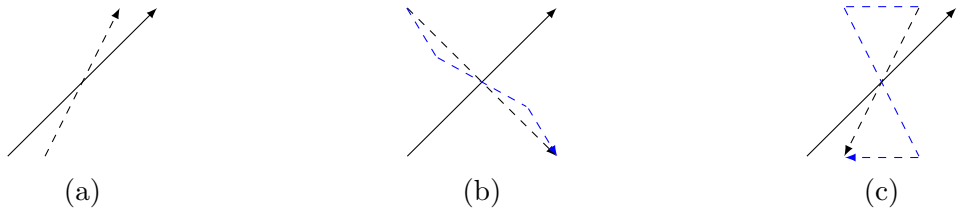
\begin{figure}[h]
\centering
\begin{subfigure}{.32\textwidth}
\centering
\begin{tikzpicture} [scale=2]
\node[inner sep=0, outer sep=0] (P1) at (0,0) {};
\node[inner sep=0, outer sep=0] (P2) at (1,1) {};
\node[inner sep=0, outer sep=0] (P3) at (0.25,0) {};
\node[inner sep=0, outer sep=0] (P4) at (0.75,1) {};

\draw[-latex, black] (P1)  to (P2);
\draw[-latex, dashed] (P3)  to (P4);
\end{tikzpicture}
\caption{}
\label{fig:adj2dtraja}
\end{subfigure}
\begin{subfigure}{.32\textwidth}
\centering
\begin{tikzpicture} [scale=2]
\node[inner sep=0, outer sep=0] (P1) at (0,0) {};
\node[inner sep=0, outer sep=0] (P2) at (1,1) {};
\node[inner sep=0, outer sep=0] (P3) at (0,1) {};
\node[inner sep=0, outer sep=0] (P4) at (1,0) {};
\node[inner sep=0, outer sep=0] (P5) at (1/5,2/3) {};
\node[inner sep=0, outer sep=0] (P6) at (4/5,1/3) {};

\draw[-latex] (P1)  to (P2);
\draw[-latex, dashed] (P3)  to (P4);

\draw[ dashed, blue] (P3) to (P5);
\draw[ dashed, blue] (P5) to (P6);
\draw[-latex, dashed, blue] (P6) to (P4);
\end{tikzpicture}
\caption{}
\label{fig:adj2dtrajb}
\end{subfigure}
%
%
%
\begin{subfigure}{.32\textwidth}
\centering
\begin{tikzpicture} [scale=2]
\node[inner sep=0, outer sep=0] (P1) at (0,0) {};
\node[inner sep=0, outer sep=0] (P2) at (1,1) {};
\node[inner sep=0, outer sep=0] (P3) at (0.75,1) {};
\node[inner sep=0, outer sep=0] (P4) at (0.25,0) {};
\node[inner sep=0, outer sep=0] (P5) at (0.25,1) {};
\node[inner sep=0, outer sep=0] (P6) at (0.75,0) {};

\draw[-latex] (P1)  to (P2);
\draw[-latex, dashed] (P3)  to (P4);
\draw[ dashed, blue] (P3) to (P5);
\draw[ dashed, blue] (P5) to (P6);
\draw[-latex, dashed, blue] (P6) to (P4);
\end{tikzpicture}
\caption{}
\label{fig:adj2dtrajd}
\end{subfigure}
\caption{Sketch of the adjustments made to connections that intersect when overlapping book embedding pages. In the different cases we make the following adjustment to $\tilde{\gamma}$ from the black dashed lines to the blue dashed lines: (a) No adjustment needed; (b) Adjust the speed of $\tilde{\gamma}_1$; (c) Modify $\tilde{\gamma}_1$ to invert the sign of $\dot{\tilde{\gamma}}_1$ at the intersection point.}
\label{fig:adj2dtraj}
\end{figure}

Repeating the previous procedure for every heteroclinic connection, we obtain a robust realization of the heteroclinic network in a coupled cell system with $k+1$ cells, $\dot{x}=f^{P_{k}}(x)$. This concludes the proof of Theorem~\ref{teo:bookembedding}. 
Note that the trajectories belong to the 2D synchrony subspace $\Delta_j$, so, along the trajectory, every cell except the cell $j$ remain in synchrony and the cell $j$ re synchronizes with the others at the end of the heteroclinic connection.

\subsection{Examples and Remarks}

To illustrate Theorem~\ref{teo:bookembedding}, we present examples and discuss implications of the book-thickness constraint. 
We also compare our results with existing constructions in the literature.

\begin{exe}
The heteroclinic network in Figure~\ref{fig:hetnet8} has book-thickness equal to $2$, as seen in Example~\ref{ex:hetnet8}.
Thus, it follows from Theorem~\ref{teo:bookembedding}, that this heteroclinic network can be robustly realized in a coupled cell system with $3$ cells, i.e. in $\mathbb{R}^3$.
\end{exe}

Double-next-neighbor heteroclinic networks were studied by Castro and Lohse \cite{CL23}.
They gave an explicit system of ODE-equations in $\mathbb{R}^6$ realizing this network.
In the next example, we look to these heteroclinic networks and check that they can be robustly realized in coupled cell system with dimension $4$, $5$ or $6$ depending on the number of equilibrium points.

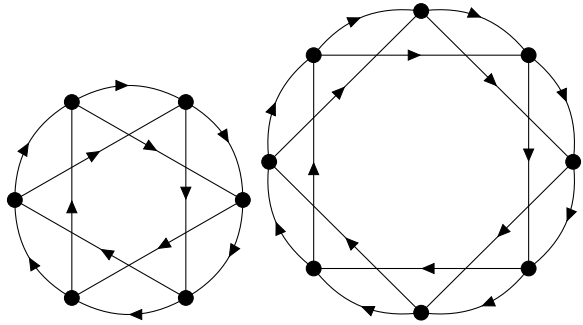
\begin{figure}[h]
\centering

\begin{tikzpicture}[every node/.style={draw,shape=circle,fill=black,scale=0.5},    midarrow/.style={
        postaction={decorate},
        decoration={markings, mark=at position 0.5 with {\arrow[scale=0.8,fill=black]{triangle 45}}}}]

\def\n{6}
\def\radius{1.5}
\pgfmathsetmacro{\nmu}{\n-1}

\foreach \i in {0,...,\nmu} {
		\pgfmathsetmacro{\j}{mod(\i-1,\n)}
    \node (N\i) at ({\radius*cos(360/\n*\i)}, {\radius*sin(360/\n*\i)}) {};
}

\foreach \i in {0,...,\nmu} {
    \pgfmathsetmacro{\j}{mod(\i-1,\n)}
    \pgfmathsetmacro{\k}{mod(\i-2,\n)}
    \draw ({\radius*cos(360/\n*\i)}, {\radius*sin(360/\n*\i)}) [midarrow] to [bend left]  ({\radius*cos(360/\n*\j)}, {\radius*sin(360/\n*\j)});
		\draw ({\radius*cos(360/\n*\i)}, {\radius*sin(360/\n*\i)}) [midarrow] to   ({\radius*cos(360/\n*\k)}, {\radius*sin(360/\n*\k)});
}
\end{tikzpicture}
\begin{tikzpicture}[every node/.style={draw,shape=circle,fill=black,scale=0.5},    midarrow/.style={
        postaction={decorate},
        decoration={markings, mark=at position 0.5 with {\arrow[scale=0.8,fill=black]{triangle 45}}}}]

\def\n{8}
\def\radius{2}
\pgfmathsetmacro{\nmu}{\n-1}

\foreach \i in {0,...,\nmu} {
		\node (N\i) at ({\radius*cos(360/\n*\i)}, {\radius*sin(360/\n*\i)}) {};
}

\foreach \i in {0,...,\nmu} {
    \pgfmathsetmacro{\j}{mod(\i-1,\n)}
    \pgfmathsetmacro{\k}{mod(\i-2,\n)}
    \draw ({\radius*cos(360/\n*\i)}, {\radius*sin(360/\n*\i)}) [midarrow] to [bend left]  ({\radius*cos(360/\n*\j)}, {\radius*sin(360/\n*\j)});
		\draw ({\radius*cos(360/\n*\i)}, {\radius*sin(360/\n*\i)}) [midarrow] to   ({\radius*cos(360/\n*\k)}, {\radius*sin(360/\n*\k)});
}
\end{tikzpicture}
\caption{Some double-next-neighbor networks.}
\label{fig:dnnet}
\end{figure}

\begin{exe}
A network with $n$ nodes is a double-next-neighbor, if there exists a ordering of the cells such that each node receives two connections from the two preceding nodes module $n$. 
Some examples are displayed in Figure~\ref{fig:dnnet}.

We consider the following book embedding of the double-next-neighbor network. 
The equilibrium nodes are placed in a straight line.
The connection from $1$ to $2$ goes to page $H_1$, the connection from $1$ to $3$ goes to page $H_2$, the connection from $2$ to $3$ goes to page $H_2$, the connection from $2$ to $4$ goes to page $H_3$,  the connection from $3$ to $4$ goes to page $H_3$, the connection from $3$ to $5$ goes to page $H_1$, and we repeat this process until the outgoing connections from node $n-2$.
The book embedding of the previous connections can be done free of intersections by placing the arcs successively above and below straight line.
We place the connection from $n-1$ to $n$ in the same page that the connection from $n-2$ to $n$, lets say $H_k$.
Note that $k$ depends on the number of equilibrium node, $n$, and the last $3$ connections are placed depending on $k$.\\
If $k=1$ ($n$ module 3 is $2$), then we place the connections from $n-1$ to $1$ and from $n$ to $1$ in a new page $H_4$ and the connection from $n$ to $2$ in page $5$.\\
If $k=2$ ($n$ module 3 is $0$), the connections from $n-1$ to $1$ and from $n$ to $1$ in page $3$ and the connection from $n$ to $2$ in page $1$.\\
If $k=3$ ($n$ module 3 is $1$), the connections from $n-1$ to $1$ and from $n$ to $1$ in page $4$ and the connection from $n$ to $2$ in page $1$.\\
In Figure~\ref{fig:ddnemb}, we display this book embedding using different colors for the different pages where the connections are embedded.
Thus,it follows from Theorem~\ref{teo:bookembedding} that, double-next-neighbor heteroclinic networks can be realized using coupled cell systems with four, five or six cells, depending on the number of equilibrium nodes.
\end{exe}

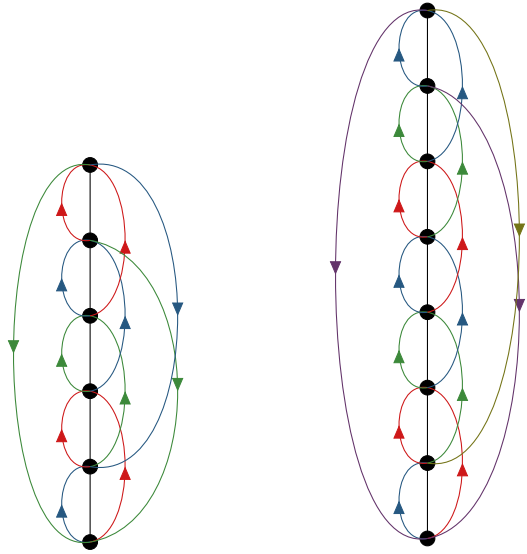
\begin{figure}[h]
\centering
\begin{tikzpicture}[every node/.style={draw,shape=circle,fill=black,scale=0.5},    midarrow/.style={
        postaction={decorate},
        decoration={markings, mark=at position 0.5 with {\arrow[scale=0.8,fill=black]{triangle 45}}}}]

\def\n{6}
\pgfmathsetmacro{\nmu}{\n-1}

\foreach \i in {0,...,\nmu} {
    \node (N\i) at (0, \i*1) { };
}
    \draw (0,0) to (0, \nmu) { };

\draw[draw={rgb:red,55;green,126;blue,184}] (0, 0) [midarrow] to [bend left=100, looseness=1.3]  (0, 1);
\draw[draw={rgb:red,228;green,26;blue,28}] (0, 0) [midarrow] to [bend right=80, looseness=0.8]  (0, 2);

\draw[draw={rgb:red,228;green,26;blue,28}] (0, 1) [midarrow] to [bend left=100, looseness=1.3]  (0, 2);
\draw[draw={rgb:red,77;green,175;blue,74}] (0, 1) [midarrow] to [bend right=80, looseness=0.8]  (0, 3);

\draw[draw={rgb:red,77;green,175;blue,74}] (0, 2) [midarrow] to [bend left=100, looseness=1.3]  (0, 3);
\draw[draw={rgb:red,55;green,126;blue,184}] (0, 2) [midarrow] to [bend right=80, looseness=0.8]  (0, 4);

\draw[draw={rgb:red,55;green,126;blue,184}] (0, 3) [midarrow] to [bend left=100, looseness=1.3]  (0, 4);
\draw[draw={rgb:red,228;green,26;blue,28}] (0,3) [midarrow] to [bend right=80, looseness=0.8]  (0, 5);

\draw[draw={rgb:red,228;green,26;blue,28}] (0, 4) [midarrow] to [bend left=100, looseness=1.3]  (0, 5);
\draw[draw={rgb:red,77;green,175;blue,74}] (0, 4) [midarrow] to [bend left=80, looseness=1]  (0, 0);

\draw[draw={rgb:red,77;green,175;blue,74}] (0, 5) [midarrow] to [bend right=100, looseness=0.7]  (0, 0);
\draw[draw={rgb:red,55;green,126;blue,184}] (0,5) [midarrow] to [bend left=100, looseness=1]  (0, 1);
%
%
\end{tikzpicture}
\hspace{10mm}
\begin{tikzpicture}[every node/.style={draw,shape=circle,fill=black,scale=0.5},    midarrow/.style={
        postaction={decorate},
        decoration={markings, mark=at position 0.5 with {\arrow[scale=0.8,fill=black]{triangle 45}}}}]

\def\n{8}
\pgfmathsetmacro{\nmu}{\n-1}

\foreach \i in {0,...,\nmu} {
    \node (N\i) at (0, \i*1) { };
}
    \draw (0,0) to (0, \nmu) { };

\draw[draw={rgb:red,55;green,126;blue,184}] (0, 0) [midarrow] to [bend left=100, looseness=1.3]  (0, 1);
\draw[draw={rgb:red,228;green,26;blue,28}] (0, 0) [midarrow] to [bend right=80, looseness=0.8]  (0, 2);

\draw[draw={rgb:red,228;green,26;blue,28}] (0, 1) [midarrow] to [bend left=100, looseness=1.3]  (0, 2);
\draw[draw={rgb:red,77;green,175;blue,74}] (0, 1) [midarrow] to [bend right=80, looseness=0.8]  (0, 3);

\draw[draw={rgb:red,77;green,175;blue,74}] (0, 2) [midarrow] to [bend left=100, looseness=1.3]  (0, 3);
\draw[draw={rgb:red,55;green,126;blue,184}] (0, 2) [midarrow] to [bend right=80, looseness=0.8]  (0, 4);

\draw[draw={rgb:red,55;green,126;blue,184}] (0, 3) [midarrow] to [bend left=100, looseness=1.3]  (0, 4);
\draw[draw={rgb:red,228;green,26;blue,28}] (0,3) [midarrow] to [bend right=80, looseness=0.8]  (0, 5);

\draw[draw={rgb:red,228;green,26;blue,28}] (0, 4) [midarrow] to [bend left=100, looseness=1.3]  (0, 5);
\draw[draw={rgb:red,77;green,175;blue,74}] (0, 4) [midarrow] to [bend right=80, looseness=0.8]  (0, 6);

\draw[draw={rgb:red,77;green,175;blue,74}] (0, 5) [midarrow] to [bend left=100, looseness=1.3]  (0, 6);
\draw[draw={rgb:red,55;green,126;blue,184}] (0,5) [midarrow] to [bend right=80, looseness=0.8]  (0, 7);

\draw[draw={rgb:red,55;green,126;blue,184}] (0,6) [midarrow] to [bend left=100, looseness=1.3]  (0, 7);
\draw[draw={rgb:red,152;green,78;blue,163}]  (0,6) [midarrow] to [bend left=80, looseness=0.7]  (0, 0);

\draw[draw={rgb:red,152;green,78;blue,163}] (0,7) [midarrow] to [bend right=100, looseness=0.6]  (0, 0);
\draw[draw={rgb:red,255;green,255;blue,51}]  (0,7) [midarrow] to [bend left=100, looseness=0.7]  (0, 1);
\end{tikzpicture}
\caption{Book embedding of some double-next-neighbor networks. Different colors of the connections represents different pages where they are embedded.}
\label{fig:ddnemb}
\end{figure}

In the previous example, the two incoming connections of almost all equilibrium nodes belong to the same page. 
However, we can also book embed double-next-neighbor networks by placing the two outgoing connections from each equilibrium node in the same page.
In this way, the unstable manifold at each equilibrium point is one dimensional and it is fully contained in the heteroclinic network.
So, there are coupled cell systems that realize any double-next-neighbor heteroclinic network and this realization is complete.

\begin{exe}
Figure~\ref{fig:ddnemb2} display book-embedding of two double-next-neighbor heteroclinic networks such that the outgoing connections from each equilibrium node share the same page.
We can always find such book-embedding using $5$ pages.
The unstable manifold of each equilibrium point belong to a unique 2D synchrony subspace and it is one dimensional.
As each equilibrium point has two outgoing connections, its unstable manifold is fully contained in the heteroclinic network. 
There exists coupled cell systems with $6$ cells that realize the double-next-neighbor heteroclinic networks in a complete way.
\end{exe}

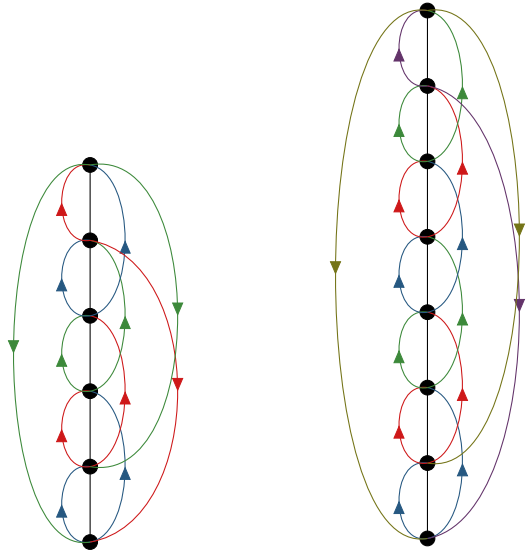
\begin{figure}[h]
\centering
\begin{tikzpicture}[every node/.style={draw,shape=circle,fill=black,scale=0.5},    midarrow/.style={
        postaction={decorate},
        decoration={markings, mark=at position 0.5 with {\arrow[scale=0.8,fill=black]{triangle 45}}}}]

\def\n{6}
\pgfmathsetmacro{\nmu}{\n-1}

\foreach \i in {0,...,\nmu} {
    \node (N\i) at (0, \i*1) { };
}
    \draw (0,0) to (0, \nmu) { };

\draw[draw={rgb:red,55;green,126;blue,184}] (0, 0) [midarrow] to [bend left=100, looseness=1.3]  (0, 1);
\draw[draw={rgb:red,55;green,126;blue,184}] (0, 0) [midarrow] to [bend right=80, looseness=0.8]  (0, 2);

\draw[draw={rgb:red,228;green,26;blue,28}] (0, 1) [midarrow] to [bend left=100, looseness=1.3]  (0, 2);
\draw[draw={rgb:red,228;green,26;blue,28}] (0, 1) [midarrow] to [bend right=80, looseness=0.8]  (0, 3);

\draw[draw={rgb:red,77;green,175;blue,74}] (0, 2) [midarrow] to [bend left=100, looseness=1.3]  (0, 3);
\draw[draw={rgb:red,77;green,175;blue,74}] (0, 2) [midarrow] to [bend right=80, looseness=0.8]  (0, 4);

\draw[draw={rgb:red,55;green,126;blue,184}] (0, 3) [midarrow] to [bend left=100, looseness=1.3]  (0, 4);
\draw[draw={rgb:red,55;green,126;blue,184}] (0,3) [midarrow] to [bend right=80, looseness=0.8]  (0, 5);

\draw[draw={rgb:red,228;green,26;blue,28}] (0, 4) [midarrow] to [bend left=100, looseness=1.3]  (0, 5);
\draw[draw={rgb:red,228;green,26;blue,28}] (0, 4) [midarrow] to [bend left=80, looseness=1]  (0, 0);

\draw[draw={rgb:red,77;green,175;blue,74}] (0, 5) [midarrow] to [bend right=100, looseness=0.7]  (0, 0);
\draw[draw={rgb:red,77;green,175;blue,74}] (0,5) [midarrow] to [bend left=100, looseness=1]  (0, 1);
%
%
\end{tikzpicture}
\hspace{10mm}
\begin{tikzpicture}[every node/.style={draw,shape=circle,fill=black,scale=0.5},    midarrow/.style={
        postaction={decorate},
        decoration={markings, mark=at position 0.5 with {\arrow[scale=0.8,fill=black]{triangle 45}}}}]

\def\n{8}
\pgfmathsetmacro{\nmu}{\n-1}

\foreach \i in {0,...,\nmu} {
    \node (N\i) at (0, \i*1) { };
}
    \draw (0,0) to (0, \nmu) { };

\draw[draw={rgb:red,55;green,126;blue,184}] (0, 0) [midarrow] to [bend left=100, looseness=1.3]  (0, 1);
\draw[draw={rgb:red,55;green,126;blue,184}] (0, 0) [midarrow] to [bend right=80, looseness=0.8]  (0, 2);

\draw[draw={rgb:red,228;green,26;blue,28}] (0, 1) [midarrow] to [bend left=100, looseness=1.3]  (0, 2);
\draw[draw={rgb:red,228;green,26;blue,28}] (0, 1) [midarrow] to [bend right=80, looseness=0.8]  (0, 3);

\draw[draw={rgb:red,77;green,175;blue,74}] (0, 2) [midarrow] to [bend left=100, looseness=1.3]  (0, 3);
\draw[draw={rgb:red,77;green,175;blue,74}] (0, 2) [midarrow] to [bend right=80, looseness=0.8]  (0, 4);

\draw[draw={rgb:red,55;green,126;blue,184}] (0, 3) [midarrow] to [bend left=100, looseness=1.3]  (0, 4);
\draw[draw={rgb:red,55;green,126;blue,184}] (0,3) [midarrow] to [bend right=80, looseness=0.8]  (0, 5);

\draw[draw={rgb:red,228;green,26;blue,28}] (0, 4) [midarrow] to [bend left=100, looseness=1.3]  (0, 5);
\draw[draw={rgb:red,228;green,26;blue,28}] (0, 4) [midarrow] to [bend right=80, looseness=0.8]  (0, 6);

\draw[draw={rgb:red,77;green,175;blue,74}] (0, 5) [midarrow] to [bend left=100, looseness=1.3]  (0, 6);
\draw[draw={rgb:red,77;green,175;blue,74}] (0,5) [midarrow] to [bend right=80, looseness=0.8]  (0, 7);

\draw[draw={rgb:red,152;green,78;blue,163}] (0,6) [midarrow] to [bend left=100, looseness=1.3]  (0, 7);
\draw[draw={rgb:red,152;green,78;blue,163}]  (0,6) [midarrow] to [bend left=80, looseness=0.7]  (0, 0);

\draw[draw={rgb:red,255;green,255;blue,51}] (0,7) [midarrow] to [bend right=100, looseness=0.6]  (0, 0);
\draw[draw={rgb:red,255;green,255;blue,51}]  (0,7) [midarrow] to [bend left=100, looseness=0.7]  (0, 1);
\end{tikzpicture}
\caption{Book embedding of some double-next-neighbor networks. Different colors of the connections represents different pages where they are embedded.}
\label{fig:ddnemb2}
\end{figure}

Although planar graphs can be embedded in four pages in general, the additional constraints that we included may require more pages. 
The following remark highlights that the dynamical context imposes stricter embedding rules.

\begin{rem}
Calculating the book-thickness of a graph is a NP-problem. 
However, it is know that any planar graph can be embedded in at most $4$ pages. 
Our definition is more restrictive and a planar heteroclinic network does not need to be book-embedded in four pages.
For example a heteroclinic network formed by four 2 heteroclinic cycles with a common node needs at least 5 pages, because the common node has four outgoing connections that need to be in four different pages and the incoming connection need to be in a different page.
The two outgoing connections from each page are embedded in the same page. 
\end{rem}

%
%
%
%
%

\section{Almost complete realizations}

In this section, we prove that there are almost complete realizations of any heteroclinic network without homoclinic connections using a coupled cell system.
In this case, we realize some trajectories on 3D-synchrony subspaces and every outgoing connection from a node belong to the same synchrony subspace.

\begin{teo}\label{teo:almostcomplete}
Let $\mathcal{N}$ be a heteroclinic network without homoclinic connections where $n_1$ nodes have one or two outgoing connections and $n_2$ nodes have three or more outgoing connections. 
Then the heteroclinic network can be robustly realized in a coupled cell system with $n_1+2n_2+1$ cells. Moreover, this realization is almost complete.
\end{teo}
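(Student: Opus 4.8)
The plan is to mimic the book-embedding realization from Theorem~\ref{teo:bookembedding}, but replace the 2D synchrony subspaces (pages) by 3D synchrony subspaces whenever a node needs to emit three or more connections. The key structural idea is that in a 3D synchrony subspace the unstable manifold of an equilibrium can be two-dimensional, so a single node that is a saddle in such a subspace can have \emph{all} of its outgoing connections leaving through that subspace, with the union of those connections filling (almost) the whole 2D unstable manifold. So first I would assign to each node $n_i$ a ``home'' synchrony subspace in which $n_i$ is a saddle: a 2D one $\Delta$ if $n_i$ has one or two outgoing connections (one unstable direction suffices, with at most two half-line branches), and a 3D one if $n_i$ has three or more outgoing connections (two unstable directions, so the connections can be arranged as rays in the 2D unstable manifold). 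Every node is a sink in all the other synchrony subspaces it meets, exactly as in the 2D construction. Counting cells: a 2D home subspace costs one extra cell beyond the spine, a 3D home subspace costs two extra cells, so $n_1$ nodes of the first type and $n_2$ of the second type give $n_1 + 2n_2 + 1$ cells once we include the full-synchrony spine.

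Second, I would construct the underlying coupled cell network. In analogy with the family $P_n$, I would build a homogeneous asymmetric-input network on cells $\{0,1,\dots\}$ where cell $0$ carries the spine $\Delta_0$, each ``2D node'' contributes one cell $j$ realizing the synchrony subspace $\Delta=\{x_0=x_i,\ i\ne j\}$, and each ``3D node'' contributes a pair of cells $\{j,j'\}$ realizing a synchrony subspace $\{x_0 = x_i,\ i \notin\{j,j'\}\}$ of dimension three. One must check (as in \cite[Prop.~3.3, Lemma~4.5]{F15}) that these are genuine balanced colorings, that the restricted systems have the desired triangular form, and that on the spine the eigenvalues split as $f_0+\sum f_i$ (the spine eigenvalue) plus differences $f_0-f_j$ controlling each page. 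By choosing the derivatives $\alpha_j^i$ of a locally linear $f$ near each $p_i$ — making $\alpha_0 + \sum\alpha_j < 0$, and $\alpha_0 - \alpha_j > 0$ precisely for the home subspace(s) of $n_i$ — we make $p_i$ a sink along the spine, a saddle with the right unstable dimension in its home subspace, and a sink in every other synchrony subspace it lies in. The feasibility of this sign pattern is immediate since each node has at least one outgoing connection and we only demand positivity for the single home subspace.

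Third, I would realize the connections. Outgoing connections from a 2D node work exactly as in Theorem~\ref{teo:bookembedding}: arcs in the 2D page, with ``horizontal flow'' bump-function modifications of $f$ in disjoint tubular neighborhoods away from the $\kappa$-tube around $\Delta_0$, plus the page-overlap adjustments (cases (a)--(c) of Figure~\ref{fig:adj2dtraj}) to keep the modifications compatible. For a 3D node, I would parametrize the 2D unstable manifold of $p_s$ by polar-type coordinates and send, for each target $n_t$ of an outgoing connection, a whole angular sector of initial conditions along a family of arcs in the 3D synchrony subspace to a neighborhood of $p_t$ (where $p_t$ is a sink); the finitely many sector boundaries form the measure-zero ``bad'' set, which is why the realization is almost complete rather than complete. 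The trajectories must be steered back to the $\kappa$-tube near the spine before entering the basin of the target, and we must ensure the horizontal-flow modifications for distinct connections (and distinct nodes) occur in disjoint regions, again using the finiteness of $\mathcal{N}$ and the page/sector structure to avoid interference, with sign-matching adjustments analogous to the 2D case where two modification regions would otherwise collide.

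The main obstacle I expect is the 3D connection-fitting step: in the 2D case a page holds at most one outgoing branch per node and the arcs are easy to keep disjoint, but here a single 3D subspace must carry \emph{all} outgoing connections of a node simultaneously, so one needs an explicit robust vector field on the 3D synchrony subspace whose unstable manifold at $p_s$ is foliated (up to a measure-zero set) by heteroclinic trajectories landing in the various sinks $p_t$ — essentially a small ``one-to-many'' heteroclinic building block — and then one must glue these blocks for all nodes without the bump-function supports overlapping in incompatible ways. Verifying that the sector-boundary exceptional set genuinely has Lebesgue measure zero (hence matches Definition~2.6 of almost completeness from \cite{ACL20}), and that transversality/robustness survives all the gluings, is where the real work lies; the cell-counting and the balanced-coloring bookkeeping are routine extensions of \cite{F15}.
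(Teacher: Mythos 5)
Your proposal follows essentially the same route as the paper: home synchrony subspaces of dimension two or three per node (giving the $n_1+2n_2+1$ cell count), an inductive construction of the underlying coupled cell network generalizing $P_n$, sign conditions on the partial derivatives at each full-synchrony equilibrium, and a one-to-many building block in each 3D subspace whose sector boundaries form the measure-zero exceptional set --- the paper implements your ``angular sectors'' concretely as the lateral faces of a $k$-sided prism around the equilibrium, with each face carrying the trajectories bound for one target. The one point where your sketch's expectation differs from the actual computation is the transverse spectrum in a 3D subspace: the restricted Jacobian there is a full $3\times 3$ matrix whose two transverse eigenvalues are $\tfrac{1}{2}\bigl(2f_0-f_{j_1}-f_{j_2}\pm\sqrt{f_{j_2}^2+2f_{j_1}f_{j_2}-3f_{j_1}^2}\bigr)$ rather than simple differences $f_0-f_j$, so the sign pattern is feasible but not ``immediate''; the paper arranges it by taking the two new partial derivatives equal, which makes the discriminant vanish.
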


This result extends the previous realization by allowing connections in 3D synchrony subspaces when a node has multiple outgoing connections.

The proof of this result follows the same steps that the proof of Theorem~\ref{teo:bookembedding}.
First, we inductively construct a coupled cell network supporting the desired minimal synchrony subspaces and the convenient eigenvalues at full-synchronous equilibrium points. 
The equilibrium nodes of the heteroclinic network will correspond to equilibrium points inside the full-synchronous subspace. 
Given a node with one or two outgoing connections, its outgoing heteroclinic connections will be in the same 2D-synchrony subspace.
As done before, we embed the outgoing connections from that node in a page, $\mathbb{R}^2$, and use that to realize the outgoing heteroclinic connections from the respective equilibrium point.  
Taking a node with three or more outgoing connections, its outgoing heteroclinic connections will be in the same 3D-synchrony subspace.
In this case, the outgoing connections are embedded in $\mathbb{R}^3$ which is used to realize these heteroclinic connections.
The connections are embedded in a way that the obtained realization is almost complete.

{\bf Construction of the coupled cell network $Q_{n_1,n_2}$.}

Now, we inductively define the coupled cell network $Q_{n_1,n_2}$ with $n_1+2n_2+1$ cells.
The coupled cell network that we consider depends if there exists a node with less than two outgoing edges, $n_1>0$, or not, $n_1=0$.

If $n_1=0$, let $Q_{0,0}$ be the coupled cell network with one cell, called $0$, and no edges.
Assume that the coupled cell network $Q_{0,k}$ is known, we inductively define the network $Q_{0,k+1}$ by adding two new cells as follows.
The set of cells of $Q_{0,k}$ is  $\{0,1,\dots, 2k\}$ and there are $2k$ edges types divided as $E_1, E_2,\dots, E_{2k}$.
The cells of $Q_{0,k+1}$ are $\{0,1,\dots, 2k, 2k+1, 2(k+1)\}$ and it has $2(k+1)$ edge types.
For each $j=1,\dots,2k$, edges of type $i_j$ are $E_j\cup \{ (j,2k+1),(j,2(k+1))\}$.
The edges of type $i_{2k+1}$ are $\{(2k+1,c):c=0,\dots,2k\}\cup\{(2(k+1),2k+1),(0,2(k+1))\}$.
And the edges of type $i_{2(k+1)}$ are $\{(2k+2,c):c=0,\dots,2k\}\cup\{(2(k+1),2k+1),(2k+1,2(k+1))\}$.
Figure~\ref{fig:pn1} displays some examples.

\begin{figure}[h]
\centering
\begin{subfigure}{.2\textwidth}
\begin{tikzpicture}
\node (n1) [circle,draw]   {0};
\node (n2) [circle,draw] [below=of n1]  {1};
\node (n3) [circle,draw] [right=of n2]  {2};


\draw[->, thick] (n2) to (n1);
\draw[->, thick] (n3) to [bend right] (n2);
\draw[->, thick] (n1) to [bend left] (n3);

\draw[->>, thick] (n3) to [bend right] (n1);
\draw[->>, thick] (n3) to (n2);
\draw[->>, thick] (n2) to (n3);
\end{tikzpicture}
\end{subfigure}
\begin{subfigure}{.30\textwidth}
\begin{tikzpicture}
\node (n1) [circle,draw]   {0};
\node (n2) [circle,draw] [below=of n1]  {1};
\node (n3) [circle,draw] [right=of n2]  {2};
\node (n4) [circle,draw] [below=of n2]  {3};
\node (n5) [circle,draw] [right=of n4]  {4};


\draw[->, thick] (n2) to (n1);
\draw[->, thick] (n3) to [bend right] (n2);
\draw[->, thick] (n1) to [bend left] (n3);
\draw[->, thick] (n2) to (n4);
\draw[->, thick] (n2) to (n5);

\draw[->>, thick] (n3) to [bend right] (n1);
\draw[->>, thick] (n3) to (n2);
\draw[->>, thick] (n2) to (n3);
\draw[->>, thick] (n3) to [bend right] (n4);
\draw[->>, thick] (n3) to (n5);

\draw[->, dashed] (n4) to [bend left] (n1);
\draw[->, dashed] (n4) to [bend right] (n2);
\draw[->, dashed] (n4) to (n3);
\draw[->, dashed] (n5) to [bend right] (n4);
\draw[->, dashed] (n1) to (n5);

\draw[->>, dashed] (n5) to (n1);
\draw[->>, dashed] (n5) to [out=110,in=340] (n2);
\draw[->>, dashed] (n5) to [bend right] (n3);
\draw[->>, dashed] (n5) to (n4);
\draw[->>, dashed] (n4) to (n5);
\end{tikzpicture}
\end{subfigure}
\begin{subfigure}{.30\textwidth}
\begin{tikzpicture}
\node (n1) at ({2*sin(0)},{2*cos(0)}) [circle,draw]   {0};
\node (n2)  at ({2*sin(51)},{2*cos(51)}) [circle,draw]  {1};
\node (n3) [circle,draw] at ({2*sin(102)},{2*cos(102)})  {2};
\node (n4) [circle,draw] at ({2*sin(153)},{2*cos(153)})  {3};
\node (n5) [circle,draw] at ({2*sin(204)},{2*cos(204)})  {4};
\node (n6) [circle,draw] at ({2*sin(255)},{2*cos(255)})  {5};
\node (n7) [circle,draw] at ({2*sin(306)},{2*cos(306)})  {6};


\draw[->, thick] (n2) to (n1);
\draw[->, thick] (n3) to  (n2);
\draw[->, thick] (n1) to [out=325,in=115] (n3);
\draw[->, thick] (n2) to (n4);
\draw[->, thick] (n2) to (n5);
\draw[->, thick] (n2) to  (n6);
\draw[->, thick] (n2) to (n7);

\draw[->>, thick] (n3) to [out=115,in=325]  (n1);
\draw[->>, thick] (n3) to [bend right] (n2);
\draw[->>, thick] (n2) to [bend left] (n3);
\draw[->>, thick] (n3) to  (n4);
\draw[->>, thick] (n3) to (n5);
\draw[->>, thick] (n3) to  (n6);
\draw[->>, thick] (n3) to (n7);

\draw[->, dashed] (n4) to [out=115,in=265] (n1);
\draw[->, dashed] (n4) to [out=90,in=245] (n2);
\draw[->, dashed] (n4) to [bend right] (n3);
\draw[->, dashed] (n5) to  (n4);
\draw[->, dashed] (n1) to [out=245,in=90] (n5);
\draw[->, dashed] (n4) to  (n6);
\draw[->, dashed] (n4) to (n7);

\draw[->>, dashed] (n5) to [out=90,in=245] (n1);
\draw[->>, dashed] (n5) to [out=60,in=220] (n2);
\draw[->>, dashed] (n5) to [out=40,in=195] (n3);
\draw[->>, dashed] (n5) to [bend right] (n4);
\draw[->>, dashed] (n4) to [bend left] (n5);
\draw[->>, dashed] (n5) to (n6);
\draw[->>, dashed] (n5) to (n7);

\draw[->, densely dotted] (n6) to [out=65,in=225]  (n1);
\draw[->, densely dotted] (n6) to [out=40,in=195] (n2);
\draw[->, densely dotted] (n6) to [out=10,in=170] (n3);
\draw[->, densely dotted] (n6) to [out=330,in=160]  (n4);
\draw[->, densely dotted] (n6) to [bend right] (n5);
\draw[->, densely dotted] (n7) to  (n6);
\draw[->, densely dotted] (n1) to [bend right] (n7);

\draw[->>, densely dotted] (n7) to [bend left] (n1);
\draw[->>, densely dotted] (n7) to [out=10,in=170] (n2);
\draw[->>, densely dotted] (n7) to [out=350,in=145] (n3);
\draw[->>, densely dotted] (n7) to [out=300,in=140] (n4);
\draw[->>, densely dotted] (n7) to [out=280,in=120](n5);
\draw[->>, densely dotted] (n7) to [bend right] (n6);
\draw[->>, densely dotted] (n6) to [bend left] (n7);
\end{tikzpicture}
\end{subfigure}
\caption{Construction of networks $Q_{0,k}$ for $k=1,2,3$, showing how additional cells and edge types are added to support 3D synchrony subspaces.}
\label{fig:pn1}
\end{figure}
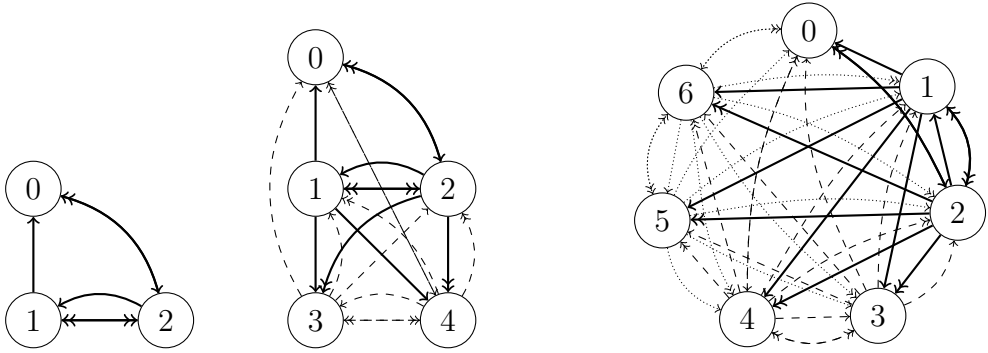

If there exists a node with two or less outgoing edges, $n_1>0$, we start with the network $P_{n_1}$ and inductively add two new cells in each step.
Define $Q_{n_1,0}:=P_{n_1}$ the coupled cell network defined in Section~\ref{sec:realbookemb} with $n_1+1$ cells and $n_1$ edge types. 
Assume that the cells of $Q_{n_1,k}$ are $\{0,1,\dots, n_1+2k\}$ and denote by $E_j$ the set of couplings with type $i_j$, where $j=1,\dots, n_1+2k$.
The network $Q_{n_1,k+1}$ is the network with cells $\{0,1,\dots, n_1+2k,n_1+2k+1,n_1+2(k+1)\}$ and two new edges types $i_{n_1+2k+1}$ and $i_{n_1+2(k+1)}$.
For each $j=1,\dots,n_1+2k$, edges with type $i_{j}$ are $E_j\cup\{(j,n_1+2k+1),(j,n_1+2(k+1))\}$.
The edges of type $i_{n_1+2k+1}$ are 
$$\{(n_1+2k+1,c):c=0,\dots, n_1+2k\}\cup\{( n_1+2(k+1),n_1+2k+1),(0 ,n_1+2(k+1))\}.$$
And the edges of type $i_{n_1+2(k+1)}$ are 
$$\{(n_1+2(k+1),c):c=0,\dots, n_1+2k\}\cup\{(n_1+2(k+1)),n_1+2k+1),(n_1+2k+1,n_1+2(k+1))\}.$$
See Figure~\ref{fig:pn2} for some examples.

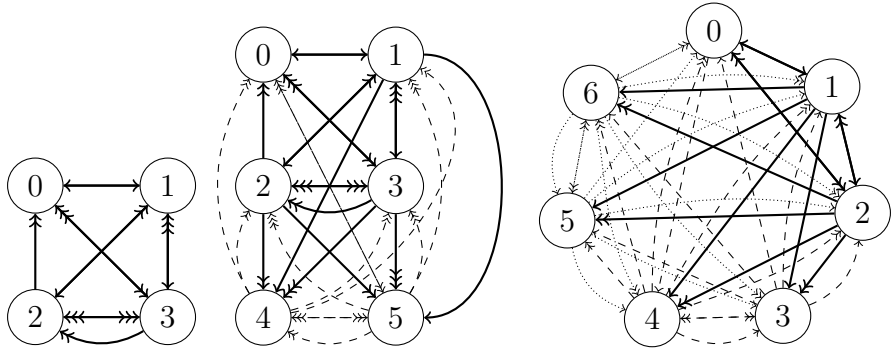
\begin{figure}[h]
\centering
\begin{tikzpicture}
\node (n1) [circle,draw]   {0};
\node (n2) [circle,draw] [right=of n1]  {1};
\node (n3) [circle,draw] [below=of n1]  {2};
\node (n4) [circle,draw] [below=of n2]  {3};

\draw[->, thick] (n2) to (n1);
\draw[->, thick] (n1) to (n2);
\draw[->, thick] (n2) to (n3);
\draw[->, thick] (n2) to (n4);

\draw[->>, thick] (n3) to (n1);
\draw[->>, thick] (n3) to (n2);
\draw[->>, thick] (n4) to [bend left] (n3);
\draw[->>, thick] (n1) to (n4);

\draw[->>>, thick] (n4) to (n1);
\draw[->>>, thick] (n4) to (n2);
\draw[->>>, thick] (n4) to (n3);
\draw[->>>, thick] (n3) to (n4);
\end{tikzpicture}
\begin{tikzpicture}
\node (n1) [circle,draw]   {0};
\node (n2) [circle,draw] [right=of n1]  {1};
\node (n3) [circle,draw] [below=of n1]  {2};
\node (n4) [circle,draw] [below=of n2]  {3};
\node (n5) [circle,draw] [below=of n3]  {4};
\node (n6) [circle,draw] [below=of n4]  {5};

\draw[->, thick] (n2) to (n1);
\draw[->, thick] (n1) to (n2);
\draw[->, thick] (n2) to (n3);
\draw[->, thick] (n2) to (n4);
\draw[->, thick] (n2) to (n5);
\draw[->, thick] (n2) to [out=0, in=0] (n6);

\draw[->>, thick] (n3) to (n1);
\draw[->>, thick] (n3) to (n2);
\draw[->>, thick] (n4) to [bend left] (n3);
\draw[->>, thick] (n1) to (n4);
\draw[->>, thick] (n3) to (n5);
\draw[->>, thick] (n3) to (n6);

\draw[->>>, thick] (n4) to (n1);
\draw[->>>, thick] (n4) to (n2);
\draw[->>>, thick] (n4) to (n3);
\draw[->>>, thick] (n3) to (n4);
\draw[->>>, thick] (n4) to (n5);
\draw[->>>, thick] (n4) to (n6);

\draw[->>, dashed] (n5) to [bend left] (n1);
\draw[->>, dashed] (n5) to [out=10, in=330] (n2);
\draw[->>, dashed] (n5) to [bend left] (n3);
\draw[->>, dashed] (n5) to [bend right] (n4);
\draw[->>, dashed] (n6) to [bend left] (n5);
\draw[->>, dashed] (n1) to (n6);

\draw[->>>, dashed] (n6) to  (n1);
\draw[->>>, dashed] (n6) to [bend right] (n2);
\draw[->>>, dashed] (n6) to [bend left] (n3);
\draw[->>>, dashed] (n6) to [bend right] (n4);
\draw[->>>, dashed] (n6) to (n5);
\draw[->>>, dashed] (n5) to (n6);
\end{tikzpicture}
\begin{tikzpicture}
\node (n1) at ({2*sin(0)},{2*cos(0)}) [circle,draw]   {0};
\node (n2)  at ({2*sin(51)},{2*cos(51)}) [circle,draw]  {1};
\node (n3) [circle,draw] at ({2*sin(102)},{2*cos(102)})  {2};
\node (n4) [circle,draw] at ({2*sin(153)},{2*cos(153)})  {3};
\node (n5) [circle,draw] at ({2*sin(204)},{2*cos(204)})  {4};
\node (n6) [circle,draw] at ({2*sin(255)},{2*cos(255)})  {5};
\node (n7) [circle,draw] at ({2*sin(306)},{2*cos(306)})  {6};

\draw[->, thick] (n2) to (n1);
\draw[->, thick] (n1) to (n2);
\draw[->, thick] (n2) to (n3);
\draw[->, thick] (n2) to (n4);
\draw[->, thick] (n2) to (n5);
\draw[->, thick] (n2) to (n6);
\draw[->, thick] (n2) to (n7);

\draw[->>, thick] (n3) to (n1);
\draw[->>, thick] (n3) to  (n2);
\draw[->>, thick] (n1) to  (n3);
\draw[->>, thick] (n3) to  (n4);
\draw[->>, thick] (n3) to  (n5);
\draw[->>, thick] (n3) to  (n6);
\draw[->>, thick] (n3) to  (n7);

\draw[->, dashed] (n4) to  (n1);
\draw[->, dashed] (n4) to [out=90, in=245] (n2);
\draw[->, dashed] (n4) to [bend right] (n3);
\draw[->, dashed] (n5) to [bend right] (n4);
\draw[->, dashed] (n1) to (n5);
\draw[->, dashed] (n4) to  (n6);
\draw[->, dashed] (n4) to  (n7);

\draw[->>, dashed] (n5) to [out=90, in=250] (n1);
\draw[->>, dashed] (n5) to [out=60, in=220] (n2);
\draw[->>, dashed] (n5) to [out=20, in=220] (n3);
\draw[->>, dashed] (n5) to  (n4);
\draw[->>, dashed] (n4) to  (n5);
\draw[->>, dashed] (n5) to  (n6);
\draw[->>, dashed] (n5) to  (n7);

\draw[->, densely dotted] (n6) to  (n1);
\draw[->, densely dotted] (n6) to [out=40,in=195] (n2);
\draw[->, densely dotted] (n6) to [out=10,in=170] (n3);
\draw[->, densely dotted] (n6) to [out=330,in=160] (n4);
\draw[->, densely dotted] (n6) to [bend right] (n5);
\draw[->, densely dotted] (n7) to [bend right] (n6);
\draw[->, densely dotted] (n1) to (n7);

\draw[->>, densely dotted] (n7) to  (n1);
\draw[->>, densely dotted] (n7) to [out=10,in=170] (n2);
\draw[->>, densely dotted] (n7) to  [out=350,in=145] (n3);
\draw[->>, densely dotted] (n7) to  [out=300,in=140] (n4);
\draw[->>, densely dotted] (n7) to [out=280,in=120] (n5);
\draw[->>, densely dotted] (n7) to  (n6);
\draw[->>, densely dotted] (n6) to  (n7);
\end{tikzpicture}
\caption{Networks $Q_{1,1}$, $Q_{1,2}$ and $Q_{2,2}$ that combine 2D and 3D synchrony subspaces to support almost complete realizations of heteroclinic networks.}
\label{fig:pn2}
\end{figure}

In the next result, we describe the minimal synchrony subspaces of the networks $Q_{n_1,n_2}$ and the eigenvalues of the Jacobian matrix $J_f^{Q_{n_1,n_2}}$.
Denote by $\Delta_j$ the 2D space given by $\{(x_0,\dots,x_k): x_1=x_i, i\neq j \}$ and by $\Delta_{j_1,j_2}$ the 3D space given by $\{(x_0,\dots,x_k): x_1=x_i, i\neq j_1,j_2 \}$.

\begin{prop}
The networks $Q_{n_1,n_2}$ has $n_1+2n_2+1$ cells and $n_1+2n_2+1$ edge types..
Any coupled cell system associated with $Q_{n_1,n_2}$ admits the following minimal synchrony subspaces: 
$$\Delta_1,\dots, \Delta_{n_1}, \Delta_{n_1+1,n_1+2},\dots, \Delta_{n_1+2n_2-1,n_1+2n_2}.$$
\end{prop}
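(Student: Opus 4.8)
The plan is to proceed in three steps. First, I would pin down the input data of $Q_{n_1,n_2}$ --- for each cell $c$ and each edge type $i$, the unique cell $c_i$ with $(c_i,c)\in E_i$ --- by induction on $n_2$. Second, I would read off from this data that every polydiagonal in the list is balanced, hence a synchrony subspace. Third, I would prove minimality by ruling out the finitely many candidate intermediate colorings.

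For the first step, write $N=n_1+2n_2$, so the cells of $Q_{n_1,n_2}$ are $0,1,\dots,N$; let $a_m=n_1+2m-1$ and $b_m=n_1+2m$ be the two cells adjoined at the $m$-th inductive step $(1\le m\le n_2)$, and let $\alpha_m,\beta_m$ be the two edge types introduced at that step, targeting $a_m$ and $b_m$. The induction on $n_2$ should give the following. For $1\le i\le n_1$ one has $c_i=i$ whenever $c\neq i$, and $i_i=0$; this is the input rule of $P_{n_1}=Q_{n_1,0}$, which one reads off from its inductive definition and which survives each step because every newly adjoined cell receives edge type $i$ from cell $i$. For $1\le m\le n_2$ one has $c_{\alpha_m}=a_m$ for $c\notin\{a_m,b_m\}$, $(a_m)_{\alpha_m}=b_m$, $(b_m)_{\alpha_m}=0$, together with $c_{\beta_m}=b_m$ for $c\neq b_m$ and $(b_m)_{\beta_m}=a_m$. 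Each of these is immediate from the edge sets prescribed at the $m$-th step, using that every later step only appends edges $(a_m,c')$ and $(b_m,c')$ towards still-newer cells $c'$. The cell count and edge-type count follow from the same induction, since every step adjoins exactly two cells and two edge types to the network $P_{n_1}$.

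For the second step, recall that a coloring is balanced exactly when same-colored cells have same-colored type-$i$ inputs for every type $i$. In $\Delta_j$ with $1\le j\le n_1$ the only nontrivial color class is $V=\{0,\dots,N\}\setminus\{j\}$; for $c,d\in V$ and any type $i$, the rule above sends $c_i$ and $d_i$ either both to $j$ (when $i=j$, since $c,d\neq j$) or both into $V$ (otherwise, because the values $0,i,a_m,b_m$ are all different from $j$), while the singleton $\{j\}$ is vacuous. In $\Delta_{a_m,b_m}$ the only nontrivial class is $W=\{0,\dots,N\}\setminus\{a_m,b_m\}$; every $c\in W$ satisfies $c_{\alpha_m}=a_m$, $c_{\beta_m}=b_m$, and $c_i\in W$ for all other types $i$, while the singletons $\{a_m\},\{b_m\}$ are again vacuous. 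So every listed polydiagonal is a synchrony subspace.

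For the third step, a synchrony subspace strictly between the full-synchrony line $\Delta_0$ and a fixed synchrony subspace corresponds to a balanced coloring obtained by merging classes of the latter's coloring. Each $\Delta_j$ with $1\le j\le n_1$ is two-dimensional, so no strict intermediate exists and $\Delta_j$ is minimal. For the three-dimensional $\Delta_{a_m,b_m}$ there are exactly three $2$-class coarsenings to test: $\Delta_{a_m}$ (absorb $b_m$ into $W$), $\Delta_{b_m}$ (absorb $a_m$ into $W$), and the coloring with class $\{a_m,b_m\}$. Each is excluded using type $\alpha_m$ and the exceptional input $(b_m)_{\alpha_m}=0$: in $\Delta_{a_m}$ the cells $0$ and $b_m$ share a color, but $0_{\alpha_m}=a_m$ and $(b_m)_{\alpha_m}=0$ do not; in $\Delta_{b_m}$ the cells $0$ and $a_m$ share a color, but $0_{\alpha_m}=a_m$ and $(a_m)_{\alpha_m}=b_m$ do not; in the last coloring $a_m$ and $b_m$ share a color, but $(a_m)_{\alpha_m}=b_m$ and $(b_m)_{\alpha_m}=0$ do not. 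Hence $\Delta_{a_m,b_m}$ is minimal. (The Jacobian spectrum at a fully-synchronous equilibrium, if part of the full statement, drops out of the same input data by the computation already carried out for $P_n$.) The step I expect to be the main obstacle is the first one: keeping the inductive bookkeeping straight so that the off-pattern inputs $i_i=0$, $(a_m)_{\alpha_m}=b_m$, $(b_m)_{\alpha_m}=0$, $(b_m)_{\beta_m}=a_m$ end up in the right places. These exceptional edges do double duty --- they are precisely what makes the three listed families invariant and, at the same time, what obstructs every candidate intermediate two-dimensional polydiagonal inside $\Delta_{a_m,b_m}$ --- so an error there would break both of the later steps, whereas once the input data is fixed the remaining verifications are routine finite checks.
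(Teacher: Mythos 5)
Your proof is correct and follows essentially the same route as the paper: the paper establishes invariance by writing out the general admissible vector field for $Q_{n_1,n_2}$ (which encodes exactly the input data $c\mapsto c_i$ you assemble by induction, so your balanced-coloring check is the same verification), and then asserts minimality of the 3D subspaces from the non-invariance of $\Delta_k$ for $k=n_1+1,\dots,n_1+2n_2$. Your minimality step is in fact slightly more complete, since you also rule out the third coarsening with color class $\{a_m,b_m\}$, which the paper's one-line argument omits; note as well that your (correct) induction yields $n_1+2n_2$ edge types rather than the $n_1+2n_2+1$ stated in the proposition, which appears to be a typo there.
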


\begin{proof}
The subspaces are invariant since the form of a coupled cell system $f^{Q_{n_1,n_2}}$ is
$$\begin{cases}
\dot{x}_0							=	f(x_0,	x_{1},x_2,\dots,x_{n_1}						,x_{n_1+1},x_{n_1+2},\dots,x_{n_1+2 n_2-1},x_{n_1+2 n_2})\\
\dot{x}_{1}						=f(x_{1},	x_{0},x_2,\dots,x_{n_1}						,x_{n_1+1},x_{n_1+2},\dots,x_{n_1+2 n_2-1},x_{n_1+2 n_2})\\
\vdots\\
\dot{x}_{n_1}					=f(x_{n_1},	x_{1},x_{2},\dots,x_{0}					,x_{n_1+1},x_{n_1+2},\dots,x_{n_1+2 n_2-1},x_{n_1+2 n_2})\\
\dot{x}_{n_1+1}				=f(x_{n_1+1},	x_{1},x_2,\dots,x_{n_1}				,x_{n_1+2},x_{n_1+2}			,\dots,x_{n_1+2 n_2-1},x_{n_1+2 n_2})\\
\dot{x}_{n_1+2}				=f(x_{n_1+2},	x_{1},x_2,\dots,x_{n_1}				,x_{0},x_{n_1+1}			,\dots,x_{n_1+2 n_2-1},x_{n_1+2 n_2})\\
\vdots\\
\dot{x}_{n_1+2 n_2-1}	=f(x_{n_1+2 n_2-1},x_{1},x_2,\dots,x_{n_1}	,x_{n_1+1},x_{n_1+2},\dots,x_{n_1+2 n_2},x_{n_1+2 n_2})\\
\dot{x}_{n_1+2 n_2}		=f(x_{n_1+2 n_2}	,x_{1},x_2,\dots,x_{n_1}	,x_{n_1+1},x_{n_1+2},\dots,x_{0},x_{n_1+2 n_2-1})
\end{cases}.$$
Moreover, the $3D$ synchrony subspaces $\Delta_{n_1+1,n_1+2}$,..., $\Delta_{n_1+2n_2-1,n_1+2n_2}$ are minimal since the subspaces $\Delta_{k}$ where $k=n_1+1,\dots,n_1+2n_2$ are not invariant.
\end{proof}

{\bf Local dynamics near equilibrium points}

The equilibrium points of the heteroclinic network will be placed in the 1D full synchrony subspace $\Delta_0\subset\mathbb{R}^{1+n_1+2n_2}$.
We check that there exists a function $f:\mathbb{R}^{1+n_1+2n_2}\rightarrow \mathbb{R}$ such that each equilibrium point has the unstable manifold contained in exactly one of the previous synchrony subspaces.

\begin{prop}
Let $p\in\Delta_0$ be a equilibrium of a coupled cell system $f^{Q_{n_1,n_2}}$ for some  $f:\mathbb{R}^{1+n_1+2n_2}\rightarrow \mathbb{R}$.
Denote by $J_f^{Q_{n_1,n_2}}$ the Jacobian matrix at that point and by $f_j$ the derivative of $f(y_0,y_1,\dots,y_{1+n_1+2n_2})$ with respect to $y_j$.
The eigenvalues of $J_f^{Q_{n_1,n_2}}$ restricted to $\Delta_j$, $j=1,\dots,n_1$, are:
$$f_0(p)+f_1(p)+\dots+f_{n_1+2n_2}(p),\quad f_0(p)-f_j(p).$$
And the eigenvalues of $J_f^{Q_{n_1,n_2}}$ restricted to $\Delta_{n_1+2j-1,n_1+2j}$, $j=1,\dots,n_2$, are:
$$f_0(p)+f_1(p)+\dots+f_{n_1+2n_2}(p),$$
$$\dfrac{1}{2}(2f_0(p)-f_{n_1+2j-1}(p)-f_{n_1+2j}(p)\pm\sqrt{f_{n_1+2j}(p)^2+2f_{n_1+2j-1}(p)f_{n_1+2j}(p)-3f_{n_1+2j-1}(p)^2}).$$

\end{prop}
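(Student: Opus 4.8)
The plan is to handle each of the listed synchrony subspaces separately: restrict the admissible vector field $f^{Q_{n_1,n_2}}$ to it, read off the resulting two- or three-dimensional ODE at the fully synchronous equilibrium $p$, and compute the eigenvalues of that reduced Jacobian. The observation that organizes everything is that $\Delta_0\subset\Delta_j$ and $\Delta_0\subset\Delta_{n_1+2j-1,n_1+2j}$; since the restriction of the coupled cell system to $\Delta_0$ is the scalar equation $\dot x=f(x,\dots,x)$, the full-synchrony direction $(1,\dots,1)$ is an eigenvector of every reduced Jacobian, with eigenvalue $f_0(p)+f_1(p)+\dots+f_{n_1+2n_2}(p)$. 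In the explicit coordinates used below this is just the statement that every row of the reduced Jacobian sums to this number, and it accounts for the common first eigenvalue in both lists. Throughout, all derivatives of $f$ are evaluated at $p$, and $\sum_i$ abbreviates a sum over $i=1,\dots,n_1+2n_2$.

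For the two-dimensional subspaces $\Delta_j$ with $j\le n_1$, the reduced system is exactly (\ref{eq:ccs2d}): cell $0$ receives its $j$-th input from cell $j$ and all other inputs from cell $0$, while cell $j$ receives every input from cell $0$. Hence the reduced Jacobian at $p$ is
\[
\begin{pmatrix} f_0+\sum_{i\ne j} f_i & f_j\\ \sum_i f_i & f_0\end{pmatrix},
\]
where $\sum_{i\ne j}$ omits the index $j$. Its trace is $2f_0+\sum_{i\ne j}f_i$, and subtracting the already-identified eigenvalue $f_0+\sum_i f_i$ leaves the second eigenvalue $f_0-f_j$. This simply repeats the $P_n$ computation.

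For a three-dimensional subspace, fix the pair and write $m=n_1+2j-1$, so that on $\Delta_{m,m+1}$ all coordinates equal a common value $u$ except $x_m=v$ and $x_{m+1}=w$. Tracing the couplings of cells $0$, $m$ and $m+1$ through the inductive definition of $Q_{n_1,n_2}$, the reduced system is $\dot u=f(u,\dots,u,v,w,u,\dots,u)$ with $v,w$ in slots $m,m+1$; $\dot v=f(v,u,\dots,u,w,w,u,\dots,u)$ with $w$ occupying both slots $m$ and $m+1$; and $\dot w=f(w,u,\dots,u,u,v,u,\dots,u)$ with $u$ in slot $m$ and $v$ in slot $m+1$, all remaining input slots of the three cells carrying $u$. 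Setting $a=f_0$, $c=f_m$, $d=f_{m+1}$ and $T=\sum_{i\ne m,m+1} f_i$, the reduced Jacobian is
\[
\begin{pmatrix} a+T & c & d\\ T & a & c+d\\ T+c & d & a\end{pmatrix}.
\]
Its rows sum to $a+T+c+d=f_0+f_1+\dots+f_{n_1+2n_2}$, confirming the first eigenvalue; its trace is $3a+T$, so the other two eigenvalues sum to $2a-c-d$; and a cofactor expansion gives determinant $(a+T+c+d)(a^2-ac-ad+c^2)$, so their product is $a^2-ac-ad+c^2$. They are therefore the roots of $\lambda^2-(2a-c-d)\lambda+(a^2-ac-ad+c^2)=0$, whose discriminant simplifies to $d^2+2cd-3c^2$; substituting back $a=f_0$, $c=f_{n_1+2j-1}$, $d=f_{n_1+2j}$ gives the stated formula.

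The hard part is the bookkeeping in this last step, not any conceptual subtlety: one has to read off correctly — for $j<n_2$ as well as for $j=n_2$ — which of the two new cells each input slot of cells $0$, $m$ and $m+1$ points to once the defining identifications of $\Delta_{n_1+2j-1,n_1+2j}$ are imposed, and then carry the $3\times 3$ determinant expansion and the factorization through without sign errors. Everything downstream of the displayed matrix is elementary linear algebra.
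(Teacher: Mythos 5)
Your proposal is correct and follows essentially the same route as the paper: restrict the admissible system to each synchrony subspace, write down the reduced $2\times 2$ or $3\times 3$ Jacobian at $p$ (your matrix with $a=f_0$, $T=\sum_{i\neq m,m+1}f_i$ is exactly the matrix displayed in the paper's proof, with $b$ in place of $T$), and extract the eigenvalues. Your use of the row-sum/trace/determinant identities to avoid expanding the characteristic polynomial directly is a slightly cleaner execution of the step the paper compresses into ``computing the eigenvalues, we obtain the result,'' and the determinant factorization and discriminant $d^2+2cd-3c^2$ check out.
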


\begin{proof}
The eigenvalues of $J_f^{Q_{n_1,n_2}}$ in the subspaces $\Delta_j$, $j=1,\dots, n_1$, are analogous to the ones obtain in Section~\ref{sec:realbookemb}.
The Jacobian matrix of $f^{Q_{n_1,n_2}}$ restricted to $\Delta_{n_1+2k-1,n_1+2k}$ at $p$ has the form:
$$\begin{bmatrix}
   f_0(p)+b & f_{2j}(p) & f_{2j+1}(p)\\
   b   & f_0(p) & f_{2j}(p)+f_{2j+1}(p)\\
   b+f_{2j}(p) & f_{2j+1}(p)  &  f_0(p)\\
\end{bmatrix},
$$
where $b=\displaystyle\sum_{\substack{i=1\\i\neq 2j+1,2j+2}}^{2k+1} f_i(p)$. Computing the eigenvalues, we obtain the result.
\end{proof}

Order the equilibrium nodes of the heteroclinic network $\mathcal{N}$ such that the node with two or less outgoing connections are the first $n_1$. 
For each equilibrium node $v_i$ of the heteroclinic network take a full-synchronous point  $p_i\in\Delta_0$.
The partial derivative of $f$ with respect to $y_l$ at $p_i$ will be given by a set of constant $\alpha_l^i$ to be selected for $i=1,\dots,n_1+n_2$ and $l=0,\dots,n_1+2n_2$.

For $i=1,\dots,n_1$, we want $p_i$ to be a source in $\Delta_i\setminus \Delta_0$ and to be a sink in the other synchrony subspaces $\Delta_j$ and $\Delta_{n_1+2k-1,n_1+2k}$ for $j=1,\dots,i-1,i+1,n_1$and $k=1,\dots,n_2$.
This means that $\alpha_0^i+\alpha_1^i+\dots+\alpha_{n_1+2n_2}^i<0$, $\alpha_0^i-\alpha_j^i<0$, $2\alpha_0^i-\alpha_{n_1+2k-1}^i-\alpha_{n_1+2k}^i\pm\sqrt{{\alpha_{n_1+2k}^i}^2+2\alpha_{n_1+2k-1}^i\alpha_{n_1+2k}^i-3{\alpha_{n_1+2k-1}^i}^2}<0$ and $\alpha_0^i-\alpha_i^i>0$, for $j\ne i$ and $k=1,\dots,n_2$.
In order to see that the previous condition can be satisfied take for example $\alpha_0^i=-1$, $\alpha_j^i=0$ for $j=1,\dots,n_1$ and $j\neq i$, $\alpha_{n_1+2k}=\alpha_{n_1+2k-1}^i=0$ for $k=1,\dots,n_2$, and $\alpha_i^i=-2$.

And for $i=n_1+1,\dots,n_1+n_2$, we want $p_i$ to be a source in $\Delta_{n_1+2(i-n_1)-1,n_1+2(i-n_1)}\setminus \Delta_0$ and to be a sink on the other $\Delta_j$ and $\Delta_{n_1+2k,n_1+2k+1}$ for  $j=1,\dots,n_1$ and $k\neq i-n_1$.
In order to achieve this the constants $\alpha_l^i$ need to satisfy the following inequalities: $\alpha_0^i+\alpha_1^i+\dots+\alpha_{n_1+2n_2}^i<0$, $\alpha_0^i-\alpha_j^i<0$, 
$$2\alpha_0^i-\alpha_{n_1+2k-1}^i-\alpha_{n_1+2k}^i\pm\sqrt{{\alpha_{n_1+2k}^i}^2+2\alpha_{n_1+2k-1}^i\alpha_{n_1+2k}^i-3{\alpha_{n_1+2k-1}^i}^2}<0$$ and 
$$2\alpha_0^i-\alpha_{n_1+2(i-n_1)-1}^i-\alpha_{n_1+2(i-n_1)}^i\pm\sqrt{{\alpha_{n_1+2(i-n_1)}^i}^2+2\alpha_{n_1+2(i-n_1)-1}^i\alpha_{n_1+2(i-n_1)}^i-3{\alpha_{n_1+2(i-n_1)-1}^i}^2}>0,$$ for $j=1,\dots, n_1$ and $k=1,\dots,n_2$ such that $k\neq i-n_1$.  
A solutions to the previous inequalities is given by $ \alpha_0^i=-1$, $\alpha_j^i=0$, $\alpha_{n_1+2k-1}^i= \alpha_{n_1+2k}^i=0$ and $\alpha_{n_1+2(i-n_1)-1}^i= \alpha_{n_1+2(i-n_1)}^i=-2$ for $j=1,\dots,n_1$ and $k=1,\dots,n_2$ such that $k\neq i-n_1$.

Now, we start to build the function $f$ which defines the coupled cell system which realizes the heteroclinic network. 
As we did in Section~\ref{sec:realbookemb}, first we define $f$ in a small neighborhood of the full-synchrony equilibrium nodes $p_i$.
We consider neighborhoods given by solid cylinders around the diagonal and denote by $C_{\epsilon}(p_i)$ the solid cylinder with radius $\epsilon$ and height $2\epsilon$ such that the center axis of the cylinder coincide with the diagonal points and the point $p_i$ is in the middle of the solid cylinder.       
Taking constants $\alpha_l^i\in\mathbb{R}$ satisfying the local conditions stated above and $\epsilon>0$ small enough, define $f:\mathbb{R}^{n_1+2n_2+2}\rightarrow \mathbb{R}$ as follows:
$$f(y_0,y_1,\dots,y_{n_1+2n_2+1})=\sum_{i=1}^{n_1+n_2}\delta_{p_i}(y_0,y_1,\dots,y_{n_1+2n_2+1}) \sum_{l=0}^{n_1+2n_2+1} \alpha_l^{p_i}(y_l-\rho_i),$$
where $p_i=(\rho_i,\dots,\rho_i)\in\Delta_0$, $\delta_{p_i}$ is a bump function which is $1$ if $(x_0,x_1,\dots,x_{n_1+2n_2+1})\in C_{\epsilon}((\rho_i,\dots,\rho_i))$ and it is $0$ if $(x_0,x_1,\dots,x_{n_1+2n_2+1})\notin C_{2\epsilon}((\rho_i,\dots,\rho_i))$.

{\bf Realizations of heteroclinic connections}

The last step to realize the heteroclinic network is to create the heteroclinic connections in the appropriate synchrony subspaces. 
Analogous to the book embedding realization, we embed the heteroclinic connections.
However, we consider a distinct embedding for the outgoing connections from each node of $\mathcal{N}$ instead of all together.
For the $n_1$ nodes with two or less outgoing connections, the outgoing connections are embedded in the plane $\mathbb{R}^2$ as they will be realized in a $2D$ synchronous subspace, $\Delta_j$. 
And the outgoing connections of the other $n_2$ nodes are embedding in the space $\mathbb{R}^3$ since they will be realized in a $3D$ synchronous subspace, $\Delta_{j_1,j_2}$.
As the realization of the heteroclinic connections leave the local dynamics, it will transit to the horizontal flow given by the embedding. 

In order to deal with the transitions, for the first $n_1$ equilibrium nodes, we consider a rectangle in $\mathbb{R}^2$ such that the two sides are parallel to the diagonal $x=y$ and two sides are perpendicular to the diagonal $x=y$.
Moreover, the vertices of the rectangle are in the circle center in $(\rho_i,\rho_i)$ and radius $2\epsilon_i$.
Denote by $R_1,R_2,\dots,R_{n_1}$ the corresponding rectangles around the points $(\rho_1,\rho_1),(\rho_2,\rho_2),\dots,(\rho_{n_1},\rho_{n_1})$.
For the other $n_2$ equilibrium nodes, we consider a prism in $\mathbb{R}^3$ with a $k$-sided polygon base where $k$ is the number of outgoing connection such that the base is orthogonal to the diagonal $x=y=z$ and the edges of the prism parallel to the diagonal.
We also impose the vertices of the prism to be on the cylinder surface around the diagonal with radius $2\epsilon_i$.
Denote by $S_{n_1+1},S_{n_1+2},\dots,S_{n_1+n_2}$ the prisms around the points $(\rho_{n_1+1},\rho_{n_1+1},\rho_{n_1+1}),(\rho_{n_1+2},\rho_{n_1+2},\rho_{n_1+2}),\dots,(\rho_{n_1+n_2},\rho_{n_1+n_2},\rho_{n_1+n_2})$.
See Figures~\ref{fig:2dsquare} and \ref{fig:3dprism} for an illustration.
Note that, we can assume that the correspondent rectangle in $\Delta_j$ or prism on $\Delta_{n_1+2j,n_1+2j+1}$ intersect the unstable manifold on the faces parallel to the diagonal by shrinking the distance to the diagonal. 
So the heteroclinic connections will cross the sides of these rectangles and prism transversely.

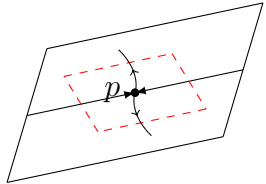
\begin{figure}[h]
\centering
\tdplotsetmaincoords{115}{70}
\begin{tikzpicture} [tdplot_main_coords,scale=0.5]
\tdplotsetrotatedcoords{0}{50}{0}

\draw[tdplot_rotated_coords] (0,0,2)  to  (4,4,6);
\draw[tdplot_rotated_coords] (4,4,6)  to (4,4,2);
\draw[tdplot_rotated_coords] (4,4,2)  to (0,0,-2);
\draw[tdplot_rotated_coords] (0,0,-2)  to  (0,0,2);

\node (P) [tdplot_rotated_coords,circle,draw,fill=black, inner sep=0pt, outer sep =0pt, minimum size=1mm, label={left}:$p$] at (2,2,2) {};

\draw[tdplot_rotated_coords,-latex] (0,0,0) to (2,2,2);
\draw[tdplot_rotated_coords,-latex] (4,4,4) to (2,2,2);

\draw[tdplot_rotated_coords,dashed, red] (0.5,0.5,1.5)  to  (2.5,2.5,3.5);
\draw[tdplot_rotated_coords,dashed, red] (2.5,2.5,3.5)  to (3.5,3.5,2.5);
\draw[tdplot_rotated_coords,dashed, red] (3.5,3.5,2.5)  to (1.5,1.5,0.5);
\draw[tdplot_rotated_coords,dashed, red] (1.5,1.5,0.5)  to  (0.5,0.5,1.5);

\foreach \t in {0,5,...,60}
\draw[tdplot_rotated_coords] ({1 + cos(\t)},{1+ cos(\t)},{2 + sin(\t)}) -- ({1 + cos(\t + 5)},{1+ cos(\t + 5)},{2 + sin(\t + 5)});

\draw[tdplot_rotated_coords,->] ({1 + cos(30)},{1+ cos(30)},{2 + sin(30)}) -- ({1+ cos(30 + 5)},{1+ cos(30 + 5)},{2 + sin(30 + 5)});

\foreach \t in {0,5,...,60}
\draw[tdplot_rotated_coords] ({3 - cos(\t)},{3- cos(\t)},{2 - sin(\t)}) -- ({3 - cos(\t + 5)},{3- cos(\t + 5)},{2 - sin(\t + 5)});

\draw[tdplot_rotated_coords,->] ({3 - cos(30)},{3- cos(30)},{2 - sin(30)}) -- ({3 - cos(30 + 5)},{3- cos(30 + 5)},{2 - sin(30 + 5)});

\end{tikzpicture}
\caption{A rectangular region in a 2D synchrony subspace surrounding an equilibrium point. The embedded heteroclinic connection crosses the rectangle transversely, facilitating the transition from local dynamics to the global trajectory.}
\label{fig:2dsquare}
\end{figure}

\begin{figure}[h]
\centering
\tdplotsetmaincoords{115}{70}
\begin{tikzpicture} [tdplot_main_coords,scale=0.5]
\tdplotsetrotatedcoords{0}{50}{0}


\node (P) [tdplot_rotated_coords,circle,draw,fill=black, inner sep=0pt, outer sep =0pt, minimum size=1mm, label={left}:$p$] at (2,2,2) {};
\draw[tdplot_rotated_coords,-latex] (-2,-2,-2) to (2,2,2);
\draw[tdplot_rotated_coords,-latex] (6,6,6) to (2,2,2);

\filldraw [tdplot_rotated_coords,dashed,draw=red,fill=red,opacity=0.3] 
        ({1+cos(0)},{1+sin(0)},{1-cos(0)-sin(0)}) -- ({1+cos(72)},{1+sin(72)},{1-cos(72)-sin(72)}) -- ({1+cos(144)},{1+sin(144)},{1-cos(144)-sin(144)}) -- ({1+cos(216)},{1+sin(216)},{1-cos(216)-sin(216)}) -- ({1+cos(288)},{1+sin(288)},{1-cos(288)-sin(288)}) -- ({1+cos(360)},{1+sin(360)},{1-cos(360)-sin(360)}) -- cycle ;

\draw[tdplot_rotated_coords,dashed, red] ({1+cos(0)},{1+sin(0)},{1-cos(0)-sin(0)})  to  ({1+cos(72)},{1+sin(72)},{1-cos(72)-sin(72)});
\draw[tdplot_rotated_coords,dashed, red] ({1+cos(72)},{1+sin(72)},{1-cos(72)-sin(72)})  to  ({1+cos(144)},{1+sin(144)},{1-cos(144)-sin(144)});
\draw[tdplot_rotated_coords,dashed, red] ({1+cos(144)},{1+sin(144)},{1-cos(144)-sin(144)})  to  ({1+cos(216)},{1+sin(216)},{1-cos(216)-sin(216)});
\draw[tdplot_rotated_coords,dashed, red] ({1+cos(216)},{1+sin(216)},{1-cos(216)-sin(216)})  to  ({1+cos(288)},{1+sin(288)},{1-cos(288)-sin(288)});
\draw[tdplot_rotated_coords,dashed, red] ({1+cos(288)},{1+sin(288)},{1-cos(288)-sin(288)})  to  ({1+cos(360)},{1+sin(360)},{1-cos(360)-sin(360)});

\filldraw [tdplot_rotated_coords,dashed,draw=red,fill=red,opacity=0.3] ({3+cos(0)},{3+sin(0)},{3-cos(0)-sin(0)})  -- ({3+cos(72)},{3+sin(72)},{3-cos(72)-sin(72)})  		-- ({3+cos(144)},{3+sin(144)},{3-cos(144)-sin(144)}) -- ({3+cos(216)},{3+sin(216)},{3-cos(216)-sin(216)}) -- ({3+cos(288)},{3+sin(288)},{3-cos(288)-sin(288)}) -- cycle;

\draw[tdplot_rotated_coords,dashed, red] ({3+cos(0)},{3+sin(0)},{3-cos(0)-sin(0)})  to  ({3+cos(72)},{3+sin(72)},{3-cos(72)-sin(72)});
\draw[tdplot_rotated_coords,dashed, red] ({3+cos(72)},{3+sin(72)},{3-cos(72)-sin(72)})  to  ({3+cos(144)},{3+sin(144)},{3-cos(144)-sin(144)});
\draw[tdplot_rotated_coords,dashed, red] ({3+cos(144)},{3+sin(144)},{3-cos(144)-sin(144)})  to  ({3+cos(216)},{3+sin(216)},{3-cos(216)-sin(216)});
\draw[tdplot_rotated_coords,dashed, red] ({3+cos(216)},{3+sin(216)},{3-cos(216)-sin(216)})  to  ({3+cos(288)},{3+sin(288)},{3-cos(288)-sin(288)});
\draw[tdplot_rotated_coords,dashed, red] ({3+cos(288)},{3+sin(288)},{3-cos(288)-sin(288)})  to  ({3+cos(360)},{3+sin(360)},{3-cos(360)-sin(360)});

\filldraw [tdplot_rotated_coords,dashed,draw=red,fill=red,opacity=0.2] ({1+cos(0)},{1+sin(0)},{1-cos(0)-sin(0)}) -- ({1+cos(72)},{1+sin(72)},{1-cos(72)-sin(72)}) -- ({3+cos(72)},{3+sin(72)},{3-cos(72)-sin(72)}) --    ({3+cos(0)},{3+sin(0)},{3-cos(0)-sin(0)})-- cycle;

\filldraw [tdplot_rotated_coords,dashed,draw=red,fill=red,opacity=0.2] ({1+cos(72)},{1+sin(72)},{1-cos(72)-sin(72)}) -- ({1+cos(144)},{1+sin(144)},{1-cos(144)-sin(144)}) -- ({3+cos(144)},{3+sin(144)},{3-cos(144)-sin(144)}) --   ({3+cos(72)},{3+sin(72)},{3-cos(72)-sin(72)}) -- cycle;

\filldraw [tdplot_rotated_coords,dashed,draw=red,fill=red,opacity=0.2] ({1+cos(144)},{1+sin(144)},{1-cos(144)-sin(144)}) -- ({1+cos(216)},{1+sin(216)},{1-cos(216)-sin(216)}) -- ({3+cos(216)},{3+sin(216)},{3-cos(216)-sin(216)}) --   ({3+cos(144)},{3+sin(144)},{3-cos(144)-sin(144)}) -- cycle;

\filldraw [tdplot_rotated_coords,dashed,draw=red,fill=red,opacity=0.2] ({1+cos(216)},{1+sin(216)},{1-cos(216)-sin(216)}) -- ({1+cos(288)},{1+sin(288)},{1-cos(288)-sin(288)}) -- ({3+cos(288)},{3+sin(288)},{3-cos(288)-sin(288)}) -- ({3+cos(216)},{3+sin(216)},{3-cos(216)-sin(216)}) -- cycle;

\filldraw [tdplot_rotated_coords,dashed,draw=red,fill=red,opacity=0.2]  ({1+cos(288)},{1+sin(288)},{1-cos(288)-sin(288)}) -- ({1+cos(360)},{1+sin(360)},{1-cos(360)-sin(360)}) -- ({3+cos(0)},{3+sin(0)},{3-cos(0)-sin(0)}) -- ({3+cos(288)},{3+sin(288)},{3-cos(288)-sin(288)})  -- cycle;

\draw[tdplot_rotated_coords,dashed, red] ({1+cos(0)},{1+sin(0)},{1-cos(0)-sin(0)})  				to   ({3+cos(0)},  {3+sin(0)},  {3-cos(0)-sin(0)})  			;
\draw[tdplot_rotated_coords,dashed, red] ({1+cos(72)},{1+sin(72)},{1-cos(72)-sin(72)})  		to   ({3+cos(72)}, {3+sin(72)}, {3-cos(72)-sin(72)})  	;
\draw[tdplot_rotated_coords,dashed, red] ({1+cos(144)},{1+sin(144)},{1-cos(144)-sin(144)})  to   ({3+cos(144)},{3+sin(144)},{3-cos(144)-sin(144)});
\draw[tdplot_rotated_coords,dashed, red] ({1+cos(216)},{1+sin(216)},{1-cos(216)-sin(216)})  to   ({3+cos(216)},{3+sin(216)},{3-cos(216)-sin(216)});
\draw[tdplot_rotated_coords,dashed, red] ({1+cos(288)},{1+sin(288)},{1-cos(288)-sin(288)})  to   ({3+cos(288)},{3+sin(288)},{3-cos(288)-sin(288)});


\draw[tdplot_rotated_coords] (2,2,2) to ({2 + 1.5*cos(36)},    {2+ 1.5*sin(36)},    {2 - 1.5*sin(36)     - 1.5*cos(36)});
\draw[tdplot_rotated_coords] (2,2,2) to ({2 + 4*cos(120)}, {2+ 4*sin(120)}, {2 - 4*sin(120)  - 4*cos(120)});
\draw[tdplot_rotated_coords] (2,2,2) to ({2 + 2*cos(36+144)},{2+ 2*sin(36+144)},{2 - 2*sin(36+144) - 2*cos(36+144)});
\draw[tdplot_rotated_coords] (2,2,2) to ({2 + 2*cos(36+216)},{2+ 2*sin(36+216)},{2 - 2*sin(36+216) - 2*cos(36+216)});
\draw[tdplot_rotated_coords] (2,2,2) to ({2 + 4*cos(36+288)},{2+ 4*sin(36+288)},{2 - 4*sin(36+288) - 4*cos(36+288)});

\draw[tdplot_rotated_coords,->] (2,2,2) to ({2 + 0.8*cos(36)},    {2+ 0.8*sin(36)},    {2 - 0.8*sin(36)     - 0.8*cos(36)});
\draw[tdplot_rotated_coords,->] (2,2,2) to ({2 + cos(120)}, {2+ sin(120)}, {2 - sin(120)  - cos(120)});
\draw[tdplot_rotated_coords,->] (2,2,2) to ({2 + cos(36+144)},{2+ sin(36+144)},{2 - sin(36+144) - cos(36+144)});
\draw[tdplot_rotated_coords,->] (2,2,2) to ({2 + cos(36+216)},{2+ sin(36+216)},{2 - sin(36+216) - cos(36+216)});
\draw[tdplot_rotated_coords,->] (2,2,2) to ({2 + 2*cos(36+288)},{2+ 2*sin(36+288)},{2 - 2*sin(36+288) - 2*cos(36+288)});


%
%
%
%

\end{tikzpicture}
\caption{A prism with a 5-sided polygonal base embedded in a 3D synchrony subspace.
The prism surrounds an equilibrium point, and each lateral face supports a distinct outgoing heteroclinic connection, enabling multidirectional transitions.}
\label{fig:3dprism}
\end{figure}
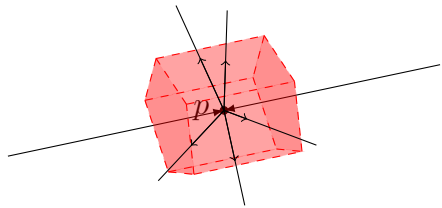

Take an equilibrium node $n_i$, $i=1,\dots,n_1$, of the heteroclinic network $\mathcal{N}$ with two or less outgoing connection. 
If $n_i$ has only one outgoing connection targeting $n_t$, we embed that connection in the plane twice: one in the half plane $x>y$ and another in the other half plane $x<y$.
Let $\gamma^i_1,\gamma^i_2:[0,1]\rightarrow \mathbb{R}^2$ be the these embedding, i.e., $\gamma^i_1(0)=\gamma^i_1(0)=(\rho_i,\rho_i)$, $\gamma^i_1(1)=\gamma^i_1(1)=(\rho_t,\rho_t)$,  $\gamma^i_1(]0,1[)\subset\{x>y\}$ and $\gamma^i_2(]0,1[)\subset\{x<y\}$.
If $n_i$ has exactly two outgoing connection targeting $n_{t_1}$ and $n_{t_2}$, we embed these connections in different half planes.
Denote these embeddings as $\gamma^i_1,\gamma^i_2:[0,1]\rightarrow \mathbb{R}^2$ such that $\gamma^i_1(0)=\gamma^i_1(0)=(\rho_i,\rho_i)$, $\gamma^i_1(1)=(\rho_{t_1},\rho_{t_1})$ and $\gamma^i_1(1)=(\rho_{t_2},\rho_{t_2})$, $\gamma^i_1(]0,1[)\subset\{x>y\}$ and $\gamma^i_2(]0,1[)\subset\{x<y\}$. 
In the first case, we also use the notation $t_1$ and $t_2$ as $t_1=t_2=t$.
For every $j=1,2$ and $i=1,\dots,n_1$, we assume, by changing the embedding, that $\gamma^i_j$ transversely crosses exactly once the rectangle $R_i$ along an edge parallel to the diagonal.
This means that there  exists an unique $\tau_R$ such that $\gamma^i_j(\tau_R)\in R_i$.
Moreover, we assume that  $\gamma^i_j$ is sufficiently far from the diagonal except when at the start and end point.
So there are $\tau_S$ and $\tau_T$ such that $\gamma^i_j([0,\tau_S[)\subset B_{2\epsilon}(\rho_{i},\rho_{i})$, $\gamma^i_j(]\tau_T,1])\subset B_{2\epsilon}(\rho_{t_j},\rho_{t_j})$ and $\gamma^i_j([\tau_S,\tau_T])\cap B_{2\epsilon}(\rho,\rho)=\emptyset$ for any $\rho\in\mathbb{R}$.

Given a node $n_i$, $i=n_1+1,\dots, n_1+n_2$, with $k$ outgoing connections targeting the nodes $n_{t_1},n_{t_2},\dots, n_{t_k}$, where $k>2$.
We embed these $k$ outgoing connections in $\mathbb{R}^3$ away from the planes $x=y$, $y=z$ and $x=z$. 
Denote by $\gamma^i_1,\gamma^i_2,\dots, \gamma_k^i:[0,1]\rightarrow \mathbb{R}^3$ embeddings of the $k$ outgoing connections such that
$\gamma^i_j(0)=(\rho_i,\rho_i,\rho_i)$, $\gamma^i_j(1)=(\rho_{t_j},\rho_{t_j},\rho_{t_j})$ and $\gamma^i_j(]0,1[)\cap \{x=y \vee y=z \vee x=z\}=\emptyset$, for $j=1,\dots,k$.
Again, we assume that each embedding transversely crosses a distinct lateral faces of the $k$-prism $S_{i}$ around $(\rho_i,\rho_i,\rho_i)$ and it is sufficiently far from the diagonal, except close to  the end points.
This also means that there are unique $\tau_R$, $\tau_S$ and $\tau_T$ such that $\gamma^i_j(\tau_R)\in S_i$, $\gamma^i_j([0,\tau_S[)\subset B_{2\epsilon}(\rho_{i},\rho_{i},\rho_{i})$, $\gamma^i_j(]\tau_T,1])\subset B_{2\epsilon}(\rho_{t_j},\rho_{t_j},\rho_{t_j})$ and $\gamma^i_j([\tau_S,\tau_T])\cap B_{2\epsilon}(\rho,\rho,\rho)=\emptyset$ for any $\rho\in\mathbb{R}$, $i=n_1+1,\dots, n_1+n_2$ and $j=1,\dots,k$.

We make the following assumptions on the embedding, except at the start and target point:
\begin{itemize}
\item Any two embedding outgoing from the same node do not intersect.
This means that $\gamma_{j_1}^i(]0,1[)\cap \gamma_{j_2}^i(]0,1[)=\emptyset$ for any $i=1,\dots, n_1+n_2$.

\item Embedding of outgoing connections from two of the first $n_1$ equilibrium node do not intersect or their intersection respects the conditions presented in Figure~\ref{fig:adj2dtraj}.

\item Embedding of outgoing connections from different nodes with more than two outgoing connections also do not intersect, as the embedding is in $\mathbb{R}^3$.
For any $i_1,i_2=n_1+1,\dots,n_1+n_2$, this means that $\gamma_{j_1}^{i_1}(]0,1[)\cap \gamma_{j_2}^{i_2}(]0,1[)=\emptyset$. 
Thus, there are tubular neighborhood around the paths that do not intersect.
\end{itemize}

Now, we complement the function $f$ away from the diagonal using the previous embedding.
For the first $n_1$ nodes with two or less outgoing connection, we repeat the process done in Section~\ref{sec:realbookemb}.
Take one of these node $n_i$, $i=1,\dots,n_1$, and consider the embeddings $\gamma_1^i$ and $\gamma_2^i$ in $\mathbb{R}^2$ of the heteroclinic connections.
These heteroclinic connections will be realized inside the synchrony subspace   $\Delta_i$.
We can do this node by node, as we did in Section~\ref{sec:realbookemb}, since the intersection between the embedding of two outgoing connections is empty or it respect the conditions presented in Figure~\ref{fig:adj2dtraj}.
So, we obtain a function $f$ such that the coupled cell system $\dot{x}=f^{Q_{n_1,n_2}}(x)$ realizes the outgoing connections from the first $n_1$ nodes.
Note that the unstable manifold at the equilibrium points $p_i=(\rho_i,\rho_i,\dots, \rho_{i})\in\mathbb{R}^{n_1+2n_2+1}$ is one dimensional.
So the two embeddings $\gamma_1^i$ and $\gamma_2^i$ force the unstable manifold to be fully contained in the heteroclinic network.

Last, we focus on the equilibrium nodes than have more than two outgoing heteroclinic connections.
Let $n_i$ be a node with $k>2$ outgoing connections and $\gamma^i_j$ be the embedding of the outgoing connection in $\mathbb{R}^3$, where $i=n_1+1,\dots, n_2$ and $j=1,\dots,k$.
These heteroclinic connections will be realized inside the synchrony subspace $\Delta_{n_1+2(i-n_1)-1,n_1+2(i-n_1)}=\Delta_{2i-n_1-1,2i-n_1}$.  
The coupled cell systems inside this synchrony subspace has the form:
$$\begin{cases}
\dot{x}_0					=f(x_0					,x_0,\dots,x_{0},x_{2i-n_1-1}	,x_{2i-n_1}		,x_{0},\dots,x_{0})\\
\dot{x}_{2i-n_1-1}=f(x_{2i-n_1-1}	,x_0,\dots,x_{0},x_{2i-n_1}		,x_{2i-n_1}		,x_{0},\dots,x_{0})\\
\dot{x}_{2i-n_1}	=f(x_{2i-n_1}		,x_0,\dots,x_{0},x_{0}				,x_{2i-n_1-1}	,x_{0},\dots,x_{0})
\end{cases}.$$

For a given embedding $\gamma^i_j=(\gamma_1,\gamma_2,\gamma_3)$ targeting the node $n_t$, there exist times $\tau_S$ and $\tau_T$ where $\gamma^i_j(\tau_S)$ crosses the prism $S_i$ around $(\rho_i,\rho_i,\rho_i)$ and $\gamma^i_j(\tau_T)$ crosses a ball centered in $(\rho_t,\rho_t,\rho_t)$ with radius $\epsilon>0$.
The heteroclinic connection will follow the arc $\psi:[\tau_S,\tau_T]\rightarrow \mathbb{R}^{n_1+2n_2+1}$ given by $\psi_i(t)=\gamma_1(t)$, $i\neq 2i-n_1-1, 2i-n_1$, $\psi_{2i-n_1-1}(t)=\gamma_2(t)$ and $\psi_{2i-n_1}(t)=\gamma_3(t)$.
Since the synchrony subspace $\Delta_{2i-n_1-1,2i-n_1}$ is three dimensional, we need to find three tubular neighborhoods corresponding to the input of $x_0,x_{2i-n_1-1},x_{2i-n_1}$ where the function $f$ will be changed.
Let $A$, $B$ and $C$ be the tubular neighborhoods around $\psi([\tau_S,\tau_T])$, $$\{(\gamma_2(t),  \gamma_1(t),\dots,\gamma_1(t),\gamma_3(t),\gamma_3(t),\gamma_1(t),\dots,\gamma_1(t)): t\in [\tau_S,\tau_T]\},$$ and $$\{(\gamma_3(t),\gamma_1(t),\dots,\gamma_1(t),\gamma_1(t),\gamma_2(t)	,\gamma_1(t),\dots,\gamma_1(t)): t\in [\tau_S,\tau_T]\}.$$ 
These tubular neighborhoods are disjoint since the embedding $\gamma^i_j$ does not intersect $\{x=y\vee y=z\vee x=z\}$.
Denote by $\mathfrak{f}$ the face of the prism $S_i$ which is crossed by $\gamma^i_j$, we assume that the base of $A$, $B$ and $C$ is equal to the respective lift of face $\mathfrak{f}$ to $\mathbb{R}^{n_1+2n_2+1}$.
So the base of $A$ is equal to $\{(x_0,x_1,\dots, x_{n_1+2n_2}):  \forall_{(x,y,z)\in \mathfrak{f}} x_i=x, x_{2i-n_1-1}=y,x_{2i-n_1-1}=z \}$.
And the other cases are anologous.
Now, we are in conditions to add the relevant terms to the function $f$ in order to realize the heteroclinic connection.
We add the following term to the function $f(y_0,y_1,\dots,y_{n_1+2n_2})$:
$$\delta_{A}(y_0,y_1,\dots,y_{n_1+2n_2})\dot{\gamma_1}(t)+\delta_{B}(y_0,y_1,\dots,y_{n_1+2n_2})\dot{\gamma_2}(t)+\delta_{C}(y_0,y_1,\dots,y_{n_1+2n_2+1})\dot{\gamma_3}(t),$$
where $\delta_{A},\delta_{B},\delta_{C}$ are bump functions that are zero outside $A,B,C$ and greater than zero inside $A,B,C$, 
and $t$ is the time such that the projection of $(y_0,y_1,\dots,y_{n_1+2n_2+1})$ into the center of $A$ corresponds to the time $t$ when $(y_0,y_1,\dots,y_{n_1+2n_2+1})\in A$ for $i=1,2,3$.

The unstable manifold of $(\rho_i,\dots, \rho_i)$ is two dimensional, it is contained in $\Delta_{2i-n_1-1,2i-n_1}$ and it intersects the lateral faces of the prism $S_i$.
In particular, the intersection of $W^u(p_v)$ with the lateral face $\mathfrak{f}$ has dimension $1$, i.e. it is a line segment.
Considering the trajectory that passes in any point of this line segment, as we go backwards we tend to the equilibrium node $(\rho_i,\dots, \rho_i)$. 
And as we go forwards, we follow the trajectory of $\psi$ until we reach the stable manifold of $(\rho_t,\dots, \rho_t)$.
Then, we continue to the equilibrium point $(\rho_t,\dots, \rho_t)$ which is a sink in $\Delta_{2i-n_1-1,2i-n_1}$.
Thus the heteroclinic connection from $n_i$ to $n_t$ is realized in the coupled cell system $\dot{x}=f^{Q_{n_1,n_2}}(x)$.
The heteroclinic connections lie in the minimal synchrony subspace $\Delta_{2i-n_1-1,2i-n_1}$ where the target equilibrium nodes are stable,
thus the heteroclinic connections are robust to small perturbations of the function $f$.
We can repeat the previous process for the other outgoing connections from this node, since we assume that theirs embedding does not overlap, except at the starting and targeting nodes.
The intersection of the unstable manifold $W^u((\rho_i,\dots, \rho_i))$ with the lift of the prism $S_i$ is inside the synchrony subspace $\Delta_{2i-n_1-1,2i-n_1}$ and it is given by a $k$-polygon.
The trajectories passing through this polygon, except its vertices, converge to one of the targeting equilibrium point.
The set of trajectories passing through the vertices of the polygon has measure zero.
So the unstable manifold $W^u((\rho_i,\dots, \rho_i))$ is contained in the heteroclinic network, except for a set of zero measure.

Note that the previous terms do not destroy the outgoing heteroclinic connections from the first $n_1$ equilibrium nodes as the embedding in $\mathbb{R}^3$ do not intersect $\{x=y\vee y=z\vee x=z\}$.
Thus, the heteroclinic connections previous realized in $f^{Q_{n_1,n_2}}$ are not affected and continue to be realized. 
We can repeat the previous process for the other equilibrium nodes with more than two outgoing connections, because the embeddings in $\mathbb{R}^3$ do not intersect each other.
This finishes the proof that the coupled cell system $\dot{x}=f^{Q_{n_1,n_2}}(x)$ realize the heteroclinic network $\mathcal{N}$ in a robust and almost complete way.

%

\section{Conclusions and Future work}
This work presents new methods for realizing heteroclinic networks in coupled cell systems placing the heteroclinic connection on both 2D and 3D synchrony subspaces. 
By adapting Field's construction for 2D synchrony subspaces and book embedding concept, we showed that the number of required cells can be minimized based on the network's book-thickness.
We also extended the framework to allow almost complete realizations as the heteroclinic connection belong to 3D synchrony subspaces.
The present work leads to questions about the stability of the constructed heteroclinic networks and the nature of switching between equilibria.

\section*{Acknowledgments}
PS was supported by Project ISEG Research - UID/06522/2025 financed by FCT/MCTES through national funds.


\end{document}